\newcommand{\reals}{\mathbb{R}}
\newcommand{\rationals}{\mathbb{Q}}
\newcommand{\complex}{\mathbb{C}}
\newcommand{\integers}{\mathbb{Z}}
\newcommand{\bracketa}[1]{\big[#1\big]}
\newcommand{\bracketb}[1]{\Big[#1\Big]}
\newcommand{\bracketc}[1]{\bigg[#1\bigg]}
\newcommand{\paraa}[1]{\big(#1\big)}
\newcommand{\parab}[1]{\Big(#1\Big)}
\newcommand{\parac}[1]{\bigg(#1\bigg)}
\newcommand{\spacearound}[1]{\quad#1\quad}
\renewcommand{\implies}{\spacearound{\Rightarrow}}
\newcommand{\qtext}[1]{\quad\text{#1}\quad}
\newcommand{\qqtext}[1]{\qquad\text{#1}\qquad}
\newcommand{\qand}{\qtext{and}}
\newcommand{\qqand}{\qqtext{and}}
\newcommand{\mathand}{\qand}
\newtheorem{theorem}{Theorem}[section]
\newtheorem{proposition}[theorem]{Proposition}
\theoremstyle{definition}
\newtheorem{definition}[theorem]{Definition}
\theoremstyle{remark}
\newtheorem{remark}[theorem]{Remark}
\numberwithin{equation}{section}
\newcommand{\xv}{\vec{x}}
\newcommand{\C}{\mathcal{C}}
\newcommand{\Ch}{\C_{\hbar}}
\newcommand{\Chp}{\C_{\hbar'}}
\newcommand{\Chinf}{\Ch^{\infty}}
\newcommand{\Chhinf}{\widehat{\mathcal{C}}_{\hbar}^{\infty}}
\newcommand{\Chpinf}{\Chp^{\infty}}
\newcommand{\Der}{\operatorname{Der}}
\newcommand{\g}{\mathfrak{g}}
\newcommand{\eps}{\varepsilon}
\newcommand{\E}{\mathcal{E}}
\newcommand{\Ehhp}{\E_{\hbar,\hbar'}}
\newcommand{\lhp}[1]{_\bullet\langle#1\rangle}
\newcommand{\rhp}[1]{\langle#1\rangle_\bullet}
\newcommand{\xh}{\hat{x}}
\newcommand{\prodh}{\bullet_\hbar}
\newcommand{\fh}{\hat{f}}
\newcommand{\gh}{\hat{g}}
\newcommand{\yv}{\vec{y}}
\newcommand{\zv}{\vec{z}}
\newcommand{\sigmah}{\sigma_{\hbar}}
\newcommand{\A}{\mathcal{A}}
\newcommand{\TChhinf}{T\Chhinf}
\newcommand{\Mphi}{M_{\varphi}}
\newcommand{\lambdat}{\tilde{\lambda}}
\newcommand{\epst}{\tilde{\eps}}
\newcommand{\gt}{\tilde{g}}
\newcommand{\ft}{\tilde{f}}
\newcommand{\supp}{\operatorname{supp}}
\newcommand{\nablat}{\widetilde{\nabla}}
\title[]{Projections, modules and connections \\ for the noncommutative cylinder}
\date{August 2020}
\author{Joakim Arnlind and Giovanni Landi}
\address[Joakim Arnlind]{Department of Mathematics\\
Link\"oping University\\
581 83 Link\"oping\\
Sweden}
\email{joakim.arnlind@liu.se}
\address[Giovanni Landi]{
Matematica, Universit\`a di Trieste \\ 
Via A.Valerio, 12/1, 34127  Trieste, Italy \\
Institute for Geometry and Physics (IGAP) Trieste, Italy \\
and INFN, Trieste, Italy
}
\email{landi@units.it}
\begin{document}

\begin{abstract}
  We initiate a study of projections and modules over a noncommutative
  cylinder, a simple example of a noncompact noncommutative
  manifold. Since its algebraic structure turns out to have many
  similarities with the noncommutative torus, one can develop several
  concepts in a close analogy with the latter. In particular, we
  exhibit a countable number of nontrivial projections in the algebra
  of the noncommutative cylinder itself, and show that they provide
  concrete representatives for each class in the corresponding $K_0$
  group. We also construct a class of bimodules endowed with
  connections of constant curvature.  Furthermore, with the
  noncommutative cylinder considered from the perspective of
  pseudo-Riemannian calculi, we derive an explicit expression for the
  Levi-Civita connection and compute the Gaussian curvature.
\end{abstract}

\maketitle

\tableofcontents
\parskip = .75ex

\section{Introduction}

\noindent
In the rapidly developing and conceptually growing
field of noncommutative geometry it has been of paramount importance
to have at least one tractable example exhibiting many of the nontrivial
subtleties of the theory. In this respect, 
the noncommutative torus is perhaps the most studied object in
noncommutative geometry, and it has served as a inspirational source
(as well as testing ground) for many results and concepts in more
general situations.  However, to explore the notion of noncompact
manifolds, the torus is not equally well suited. 

In this paper, we set
out to study the noncommutative cylinder as a simple manageable example of a
noncompact noncommutative manifold which still exhibits nontrivial
features. Inspired by the algebraic similarities with the torus, we
follow the same lines of thought in order to see to what extent known
concepts apply in this noncompact situation as well.

Starting from a known description in terms of Fourier transforms, we choose a particular presentation of
the noncommutative cylinder and introduce a (commuting) set of
hermitian derivations as well as a trace. After providing basic
results about these structures, we proceed to construct a class of
projections   in the algebra itself, and show that they are classified by
the integers. Moreover, by showing that the corresponding projective
modules respect the group structure of the integers, we conclude that
these projections   provide concrete representatives for each class in
the $K_0$ group of the noncommutative cylinder (which is known to be
$\integers$). A corresponding ``Chern number'' can be computed for
each projective module by evaluating the projections against a cyclic
2-cocycle.

Next, in analogy with the torus modules defined by Connes and Rieffel
\cite{c:cstaralgebre,r:irrational.rotation}, we find a class of
bimodules for the noncommutative cylinder, on which connections of
constant curvature are defined. Interestingly, these modules turn out
to be isomorphic to copies of the algebra itself. Although the details
of the bimodule structure depend on a choice of parameters, it is the case 
that the curvature only depends on the deformation parameters
$\hbar$ and $\hbar'$ defining the left and right algebras,
respectively.

Finally, we recall the framework of pseudo-Riemannian calculi, and
show that for a given choice of metric, there exists a calculus over
the noncommutative cylinder with a unique torsion-free and metric
connection, for which one may explicitly compute the Gaussian
curvature. Moreover, we illustrate a Gauss-Bonnet type theorem where
the total curvature (that is, the integral of the Gaussian curvature with
respect to the Riemannian volume form) is shown to be independent of a
class of metric perturbations.

\section{The algebra of the noncommutative cylinder}

\noindent
Let us start by recalling the definition of the algebra of the
noncommutative cylinder.  Let $\mathcal{S}(\reals\times S^1)$ denote the space of
Schwartz functions on $\reals\times S^1$. Every
$f\in\mathcal{S}(\reals\times S^1)$ may be written as
\begin{align}\label{eq:f.fourier.series}
  f(u,t) = \sum_{n\in\integers}f_n(u)e^{2\pi int},
\end{align}
with $f_n\in\mathcal{S}(\reals)$ and we introduce the Fourier
transform of the coefficients $f_n$ as
\begin{align*}
  \fh_n(x) = \int_\reals f_n(u)e^{-2\pi i ux}du.
\end{align*}
Thus any function as in \eqref{eq:f.fourier.series}, is written as 
\begin{align*}
  f(u,t) = \sum_{n\in\integers} \int_\reals \fh_n(x) e^{2\pi i (nt+ux)}dx.
\end{align*}
Following the general strategy of \cite{r:deformation.quantization.Rd}, we define a twisted convolution
product on $\mathcal{S}(\reals\times S^1)$ via
\begin{align}\label{eq:prodh.def}
  \widehat{(f\prodh g)}_n(x) =
  \sum_{k\in\integers}\int_{\reals}\fh_k(y)\gh_{n-k}(x-y)\sigmah(\yv,\xv-\yv)dy
\end{align}
where $\xv=(x,n)$, $\yv=(y,k)$ and $\sigmah$ is a cocycle
fulfilling the condition
\begin{align*}
  \sigmah(\xv,\yv)\sigmah(\xv+\yv,\zv)
  =\sigmah(\xv,\yv+\zv)\sigmah(\yv,\zv),
\end{align*}
ensuring associativity of the product. For our purposes we choose
a particular cocycle given by
\begin{align}\label{eq:sigma.cocycle}
  \sigmah\paraa{(x,n),(y,k)} = e^{2\pi i\hbar yn}.
\end{align}
Note that this cocycle is cohomologous to its antisymmetrization
$$
\sigmah(\xv,\yv)=e^{\pi i\hbar(yn-x k)},
$$ 
giving the corresponding
twisted convolution as defined in \cite{vs:nc.cylinder}; the two corresponding algebras are thus isomorphic.

\begin{definition}
  Let $\Chinf = (\mathcal{S}(\reals\times S^1), \prodh)$ be the algebra defined by the vector space
  $\mathcal{S}(\reals\times S^1)$ together with the product  $\prodh$ in
  \eqref{eq:prodh.def} for the cocycle
  \begin{equation*}
    \sigmah\paraa{(x,n),(y,k)} = e^{2\pi i\hbar yn}.
  \end{equation*}
\end{definition}

\noindent
As the product in $\Chinf$ is defined on the level of Fourier
transforms, let us derive a more explicit expression in the following
form.

\begin{proposition}\label{prop:prodh}
  Let $f,g\in \Chinf$ be such that
  \begin{align*}
    f(u,t) = \sum_{n\in\integers}f_n(u)e^{2\pi i nt}\qqand
    g(u,t) = \sum_{n\in\integers}g_n(u)e^{2\pi i nt}.
  \end{align*}
  Then
  \begin{align*}
    (f\prodh g)(u,t) = \sum_{n\in\integers}\bracketc{
    \sum_{k\in\integers}f_k(u)g_{n-k}(u+k\hbar)
    }e^{2\pi i nt}.
  \end{align*}
\end{proposition}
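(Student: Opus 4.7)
The plan is to compute the inverse Fourier transform (in the first variable) of the defining formula \eqref{eq:prodh.def} for $\prodh$ and identify the result as the claimed Fourier series in $t$. Since $f,g\in\mathcal{S}(\reals\times S^1)$, the coefficients $f_n,g_n,\fh_n,\gh_n$ all lie in $\mathcal{S}(\reals)$, so Fubini and all interchanges of sum and integral are justified throughout; I will not dwell on these routine justifications.

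First, I would substitute the explicit cocycle \eqref{eq:sigma.cocycle} into the definition. With $\xv=(x,n)$ and $\yv=(y,k)$ one has $\sigmah(\yv,\xv-\yv)=\sigmah\paraa{(y,k),(x-y,n-k)}=e^{2\pi i\hbar(x-y)k}$, so that
\begin{align*}
  \widehat{(f\prodh g)}_n(x) = \sum_{k\in\integers}\int_{\reals}\fh_k(y)\gh_{n-k}(x-y)e^{2\pi i\hbar(x-y)k}dy.
\end{align*}
Inverting the Fourier transform, the $n$-th coefficient of $f\prodh g$ as a function of $u$ is
\begin{align*}
  (f\prodh g)_n(u)=\int_\reals\widehat{(f\prodh g)}_n(x)e^{2\pi iux}dx.
\end{align*}

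The key step is the change of variables $z=x-y$ (with $x=y+z$), which decouples the double integral into a product of two one-variable Fourier inversions:
\begin{align*}
  (f\prodh g)_n(u)
  &=\sum_{k\in\integers}\int_\reals\int_\reals\fh_k(y)\gh_{n-k}(z)e^{2\pi i\hbar kz}e^{2\pi iu(y+z)}\,dz\,dy \\
  &=\sum_{k\in\integers}\parac{\int_\reals\fh_k(y)e^{2\pi iuy}dy}\parac{\int_\reals\gh_{n-k}(z)e^{2\pi i(u+k\hbar)z}dz}.
\end{align*}
Recognising each factor as an inverse Fourier transform, the first becomes $f_k(u)$ and the second becomes $g_{n-k}(u+k\hbar)$; summing over $n$ against $e^{2\pi int}$ gives the stated formula. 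The only nontrivial point is noticing that the phase $e^{2\pi i\hbar kz}$ appearing in the cocycle is precisely what implements the translation of the argument of $g_{n-k}$ by $k\hbar$ under Fourier inversion — everything else is bookkeeping.
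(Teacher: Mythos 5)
Your proof is correct and follows essentially the same route as the paper's: substitute the explicit cocycle, invert the Fourier transform in the first variable, and recognise the two resulting one-dimensional inverse transforms as $f_k(u)$ and $g_{n-k}(u+k\hbar)$. The only cosmetic difference is that you decouple the double integral via the change of variables $z=x-y$ up front, whereas the paper splits the phase and shifts the inner $x$-integral; the computations are the same.
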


\begin{proof}
  The proof consists of a straight-forward computation:
  \begin{align*}
  (f\prodh g)_n(u)
    &= \int_{\reals}\widehat{f\prodh g}_n(x)e^{2\pi ix u}dx\\
    &= \sum_{k\in\integers}\int_\reals\int_\reals\fh_k(y)\gh_{n-k}(x-y)
      e^{2\pi i\hbar(x-y)k}e^{2\pi ix u}dydx\\
    &=\sum_{k\in\integers}\int_\reals\fh_k(y)
      \bracketc{\int_{\reals}\gh_{n-k}(x-y)e^{2\pi ix(u+k\hbar)}dx}
      e^{-2\pi iyk\hbar}dy\\
    &= \sum_{k\in\integers}\int_\reals\fh_k(y)e^{2\pi iy(u+k\hbar)}
    \bracketc{\int_{\reals}\gh_{n-k}(x)e^{2\pi ix(u+k\hbar)}dx}e^{-2\pi iyk\hbar}dy\\
    &=\sum_{k\in\integers}\int_\reals\fh_k(y)g_{n-k}(u+k\hbar)e^{2\pi iyu}dy
      =\sum_{k\in\integers}f_k(u)g_{n-k}(u+k\hbar).\qedhere
  \end{align*}
\end{proof}

\noindent
From Proposition~\ref{prop:prodh} one infers the simple commutation
rule
\begin{align}\label{eq:tu.simple.com}
  f(u)e^{2\pi int}\prodh g(u) = f(u)g(u+n\hbar)\prodh e^{2\pi int}
\end{align}
which we shall often use in the following. 
To slightly simplify the notation, let us introduce $W=e^{2\pi it}$
such that every $f\in\mathcal{S}(\reals\times S^1)$ may be written as
\begin{align*}
  f(u,t) = \sum_{n\in\integers}f_n(u)W^n.
\end{align*}
In particular, \eqref{eq:tu.simple.com} now reads
\begin{align}\label{eq:tu.simple.com-2}
W^n \prodh f(u) = f(u+n\hbar)\prodh W^n .
\end{align}

\begin{remark}
As a side remark, we note that the relation
\eqref{eq:tu.simple.com} can formally be derived from the canonical
commutation relation $[u,t]=i\hbar/2\pi$ via
\begin{align*}
  e^{2\pi it}u
  &= \sum_{n\geq 0}\frac{(2\pi it)^nu}{n!}
  =\sum_{n\geq 0}\frac{(2\pi i)^n(ut^n-in\hbar t^{n-1}/2\pi)}{n!}\\
  &=ue^{2\pi it}+\hbar\sum_{k\geq 1}\frac{(2\pi it)^{n-1}}{(n-1)!}
    =(u+\hbar)e^{2\pi it},
\end{align*}
which is in close analogy with the noncommutative catenoid defined in
\cite{ah:catenoid}.
\end{remark}

\noindent
One may readily introduce a $\ast$-algebra structure on $\Chinf$.
\begin{proposition}\label{prop:star.structure}
  For $f = \sum_{n\in\integers}f_n(u)W^n\in\Chinf$, set
  \begin{align*}
    f^\ast = \sum_{n\in\integers}\overline{f_{n}(u-n\hbar)}W^{-n}
    = \sum_{n\in\integers}\overline{f_{-n}(u+n\hbar)}W^n.
  \end{align*}
Then it follows that $(f^\ast)^\ast = f$ and
  $(f\prodh g)^\ast=g^\ast\prodh f^\ast$.
\end{proposition}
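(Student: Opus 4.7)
The plan is to carry out a direct verification using the explicit formula for $\prodh$ from Proposition~\ref{prop:prodh}. Both claims reduce to bookkeeping on Fourier coefficients, with a change of summation index being the only nontrivial manipulation.

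First, I would briefly note that the two expressions given for $f^\ast$ agree: substituting $n\mapsto -n$ in $\sum_n \overline{f_n(u-n\hbar)}W^{-n}$ gives $\sum_n\overline{f_{-n}(u+n\hbar)}W^n$, so we may work with whichever form is convenient. For involutivity, writing $f^\ast = \sum_n h_n(u)W^n$ with $h_n(u)=\overline{f_{-n}(u+n\hbar)}$, applying the definition once more yields
\[
(f^\ast)^\ast = \sum_n \overline{h_{-n}(u+n\hbar)}W^n = \sum_n \overline{\overline{f_n\big((u+n\hbar)-n\hbar\big)}}\,W^n = \sum_n f_n(u)W^n = f.
\]

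For anti-multiplicativity, I would compute both sides at the level of the $n$th Fourier coefficient. By Proposition~\ref{prop:prodh}, $(f\prodh g)_n(u)=\sum_k f_k(u)g_{n-k}(u+k\hbar)$, so the definition of the involution gives
\[
\big((f\prodh g)^\ast\big)_n(u)
= \overline{(f\prodh g)_{-n}(u+n\hbar)}
= \sum_k \overline{f_k(u+n\hbar)}\,\overline{g_{-n-k}(u+(n+k)\hbar)}.
\]
On the other hand, applying Proposition~\ref{prop:prodh} to $g^\ast\prodh f^\ast$ and inserting the Fourier coefficients $(g^\ast)_k(u)=\overline{g_{-k}(u+k\hbar)}$ and $(f^\ast)_{n-k}(u+k\hbar)=\overline{f_{k-n}(u+n\hbar)}$ yields
\[
(g^\ast\prodh f^\ast)_n(u) = \sum_k \overline{g_{-k}(u+k\hbar)}\,\overline{f_{k-n}(u+n\hbar)}.
\]
Setting $k\mapsto -n-k$ in the first sum (equivalently $k\mapsto -k$ in the second) identifies the two expressions term by term.

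The only thing to be slightly careful about is tracking the various $u+\ell\hbar$ shifts so that the arguments of $f$ and $g$ line up after the reindexing; this is the step where a computational slip is most likely, so I would lay out the shifts explicitly before performing the substitution. Once the Fourier coefficients agree for every $n$, the identity $(f\prodh g)^\ast=g^\ast\prodh f^\ast$ follows.
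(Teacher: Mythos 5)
Your proof is correct and follows essentially the same route as the paper's: a direct verification at the level of Fourier coefficients using the explicit product formula from Proposition~\ref{prop:prodh}, with a single change of summation index to match the two sides. The paper phrases the anti-multiplicativity computation as $(g\prodh f)^\ast = f^\ast\prodh g^\ast$ and keeps the $W^n$ factors visible, but the bookkeeping and the reindexing step are the same as yours.
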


\begin{proof}
Just compute
  \begin{align*}
    (f^\ast)^\ast
    &= \sum_{n\in\integers}\overline{f^\ast_{-n}(u+n\hbar)}W^n\\
    &=\sum_{n\in\integers}\overline{\overline{f_{-(-n)}(u+n\hbar-n\hbar)}}W^n
      =\sum_{n\in\integers}f_n(u)W^n = f.
  \end{align*}
  Next, consider
  \begin{align*}
    f = \sum_{n\in\integers}f_n(u)W^n\qand
    g = \sum_{n\in\integers}g_n(u)W^n
  \end{align*}
  and compute
  \begin{align*}
    (g\prodh f)^\ast
    &=\sum_{n\in\integers}\overline{(g\prodh f)_{-n}(u+n\hbar)}W^n\\
    &=\sum_{n,k\in\integers}\overline{g_k(u+n\hbar)}\,\overline{f_{-n-k}(u+(n+k)\hbar)}W^n\\
    &=\sum_{n,l\in\integers}\overline{f_{-l}(u+l\hbar)}\,\overline{g_{-(n-l)}(u+n\hbar)}W^n\\
    &=\sum_{n,l\in\integers}\overline{f_{-l}(u+l\hbar)}\,\overline{g_{-(n-l)}(u+l\hbar+(n-l)\hbar)}W^n\\
    &=\sum_{n,l\in\integers}f^\ast_{l}(u)g^\ast_{n-l}(u+l\hbar)W^n=f^\ast\prodh g^\ast,
  \end{align*}
  which proves the second statement.
\end{proof}

\noindent
Thus, with respect to the involution defined in
Proposition~\ref{prop:star.structure} the algebra $\Chinf$ is a
$\ast$-algebra. By representing $\Chinf$ as multiplication operators
on $L^2(\reals\times S^1)$, i.e. $T_f\psi(u,t)=f(u,t)\psi(u,t)$
  for $f\in\Chinf$, one may complete $\Chinf$ in the operator norm to
a $C^\ast$-algebra which we shall denote by $\Ch$
(cf. \cite{vs:nc.cylinder}).

Next, let us introduce a set of commuting derivations.

\begin{proposition}
  For $f\in\Chinf$ with
  \begin{align*}
    f(u,t) = \sum_{n\in\integers}f_n(u)W^n
  \end{align*}
  define
  \begin{align*}
    \partial_1f = \sum_{n\in\integers}f'_n(u)W^n\quad\textrm{and}\quad
      \partial_2f = 2\pi i\sum_{n\in\integers}nf_n(u)W^n.    
  \end{align*}
  Then $\partial_1$ and $\partial_2$ are hermitian derivations of $\Chinf$ such
  that $[\partial_1,\partial_2]=0$.
\end{proposition}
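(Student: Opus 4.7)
The plan is to unpack each of the four claims (derivation property for $\partial_1$, derivation property for $\partial_2$, hermiticity of both, and commutativity) using the explicit formula for $\prodh$ from Proposition~\ref{prop:prodh} and the formula for $f^\ast$ from Proposition~\ref{prop:star.structure}. Linearity of $\partial_1$ and $\partial_2$ is immediate from their definitions on Fourier coefficients, so the substance of the proof is really a sequence of direct computations.

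For the Leibniz rule, I would fix $f,g\in\Chinf$ written in Fourier form and compute $\partial_i(f\prodh g)$ componentwise. Using Proposition~\ref{prop:prodh}, the $n$-th coefficient of $f\prodh g$ is $\sum_k f_k(u)g_{n-k}(u+k\hbar)$. Applying $\partial_1$ (i.e.\ $\partial_u$), the product rule gives
\begin{align*}
\partial_1(f\prodh g)_n(u)=\sum_k\bigl[f'_k(u)g_{n-k}(u+k\hbar)+f_k(u)g'_{n-k}(u+k\hbar)\bigr],
\end{align*}
and the two sums are recognized as $(\partial_1 f\prodh g)_n$ and $(f\prodh\partial_1 g)_n$ respectively; here I would stress that the shift by $k\hbar$ is respected because $g'_{n-k}(u+k\hbar)=(\partial_1 g)_{n-k}(u+k\hbar)$. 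For $\partial_2$, multiplying $(f\prodh g)_n$ by $2\pi i n$ and writing $n=k+(n-k)$ immediately splits it into $(\partial_2 f\prodh g)_n+(f\prodh\partial_2 g)_n$.

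For hermiticity, I would substitute the formula $f^\ast=\sum_n\overline{f_{-n}(u+n\hbar)}W^n$ from Proposition~\ref{prop:star.structure} and apply $\partial_1$ directly: since $\partial_u$ commutes with complex conjugation and with the shift $u\mapsto u+n\hbar$, one gets $\partial_1(f^\ast)=\sum_n\overline{f'_{-n}(u+n\hbar)}W^n$, which is exactly $(\partial_1 f)^\ast$. For $\partial_2$, the calculation is even simpler: $(\partial_2 f)^\ast=\sum_n\overline{(-2\pi i n) f_{-n}(u+n\hbar)}W^n=2\pi i n\sum_n\overline{f_{-n}(u+n\hbar)}W^n=\partial_2(f^\ast)$, since the two sign flips from conjugating $i$ and from replacing $n$ by $-n$ in the index cancel.

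Finally, commutativity is essentially a tautology in this presentation: both $\partial_1$ and $\partial_2$ act diagonally on the decomposition $f=\sum_n f_n(u)W^n$, with $\partial_1$ acting on the $u$-dependence and $\partial_2$ acting as multiplication by $2\pi i n$. Since differentiation in $u$ commutes with multiplication by a constant, one gets $\partial_1\partial_2 f=2\pi i\sum_n nf'_n(u)W^n=\partial_2\partial_1 f$. There is no real obstacle in the proof; the only point that requires a moment of care is the Leibniz rule for $\partial_1$, where one must check that the $u$-derivative commutes correctly with the $k\hbar$-shift inside $g_{n-k}(u+k\hbar)$, which it does because the shift is by a constant.
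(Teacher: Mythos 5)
Your proof is correct and follows essentially the same route as the paper: direct computation on Fourier coefficients for the Leibniz rule (splitting $2\pi i n$ as $2\pi i k + 2\pi i(n-k)$ for $\partial_2$), substitution into the explicit $f^\ast$ formula for hermiticity (with the two sign flips canceling for $\partial_2$), and the diagonal-action observation for commutativity. No gaps; the point you flag about the constant shift $u\mapsto u+k\hbar$ commuting with $\partial_u$ is the one place that genuinely needs a sentence of care, and you handled it.
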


\begin{proof}
  It is clear that $\partial_1$ and $\partial_2$ are linear maps; let us show that
  they satisfy Leibniz rule. One obtains
  \begin{align*}
    \partial_1(f\prodh g)
    &= \sum_{n,k\in\integers}
    \parab{f'_k(u)g_{n-k}(u+\hbar k)+f_k(u)g'_{n-k}(u+k\hbar)}W^n\\
    &=\sum_{n,k\in\integers}f'_k(u)g_{n-k}(u+\hbar k)W^n
      +\sum_{n,k\in\integers}f_k(u)g'_{n-k}(u+k\hbar)W^n\\
    &=(\partial_1f)\prodh g + f\prodh(\partial_1g),
  \end{align*}
  and
  \begin{align*}
    \partial_2(f\prodh g)
	 &=2\pi i\sum_{n,k\in\integers}nf_k(u)g_{n-k}(u+k\hbar)W^n\\
    &=2\pi i\sum_{n,k\in\integers}kf_k(u)g_{n-k}(u+k\hbar)W^n\\
	  &\quad +2\pi i\sum_{n,k\in\integers}f_k(u)(n-k)g_{n-k}(u+k\hbar)W^n\\
         &=(\partial_2f)\prodh g + f\prodh(\partial_2g),          
  \end{align*}
  showing that $\partial_1,\partial_2$ are indeed derivations of
  $\Chinf$. Furthermore, it is easy to see that
  \begin{align*}
    [\partial_1,\partial_2](f)
    &= 2\pi i\partial_1\sum_{n\in\integers}nf_n(u)W^n
      -\partial_2\sum_{n\in\integers}f_n'(u)W^n=0.
  \end{align*}
  Finally, let us show that $\partial_1$ and $\partial_2$ are hermitian
  derivations. One computes
  \begin{align*}
    \paraa{\partial_1(f)}^\ast
    &=\parab{\sum_{n\in\integers}f'_n(u)W^n}^\ast
      =\sum_{n\in\integers}\overline{f'_{-n}(u+n\hbar)}W^n\\
    &= \partial_1\sum_{n\in\integers}\overline{f_{-n}(u+n\hbar)}W^n
      =\partial_1(f^\ast)
  \end{align*}
  as well as
  \begin{align*}
    \paraa{\partial_2(f)}^\ast
    &= \parab{2\pi i\sum_{n\in\integers}nf_n(u)W^n}^\ast
      =-2\pi i\sum_{n\in\integers}\overline{(-n)f_{-n}(u+n\hbar)}W^n\\
    &= 2\pi i\sum_{n\in\integers}n\overline{f_{-n}(u+n\hbar)}W^n
      =\partial_2\sum_{n\in\integers}\overline{f_{-n}(u+n\hbar)}W^n
      =\partial_2(f^\ast)
  \end{align*}
  which proves that $\partial_1,\partial_2$ are hermitian.
\end{proof}

\begin{remark}
Clearly the function $u$ is not in the algebra $\Chinf$. In spite of this a direct computation shows that one can formally obtain 
a commutation expression for the derivation $\partial_2$, that is
\begin{align}\label{der-com}
 \partial_2 f = \frac{2\pi i}{\hbar} \, (f u - u f) 
\end{align}
for any $f\in\Chinf$. 
\end{remark}

\noindent
On the algebra $\Chinf$ we have a trace as well.
\begin{definition}\label{def:tau.def}
  For $f\in\Chinf$ with
  \begin{align*}
    f(u,t) = \sum_{n\in\integers}f_n(u)W^n
  \end{align*}
  we set
  \begin{align}\label{eq:tau.def}
    \tau(f) = \int_{-\infty}^\infty f_0(u)du.    
  \end{align}
\end{definition}

\noindent It is clear from the definition that $\tau$ is a linear map.

\begin{proposition}\label{prop:tau.properties}
  The map $\tau$ is a positive invariant trace; that is, it has the properties
  \begin{enumerate}
  \item $\tau(f^\ast)=\overline{\tau(f)}$,
  \item $\tau(f^\ast\prodh f)\geq 0$,
  \item $\tau(f\prodh g)=\tau(g\prodh f)$,
  \item $\tau(\partial_1f)=\tau(\partial_2f)=0$,
  \end{enumerate}
  for all $f,g\in\Chinf$.
\end{proposition}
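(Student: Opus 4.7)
The plan is to verify each of the four properties by direct computation, using the explicit expressions for the product, the involution, and the derivations established earlier.

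For property (1), I would extract the zeroth Fourier coefficient from the formula $f^\ast = \sum_n \overline{f_{-n}(u+n\hbar)} W^n$ of Proposition~\ref{prop:star.structure}, which gives $(f^\ast)_0(u) = \overline{f_0(u)}$. Integrating yields $\tau(f^\ast)=\overline{\tau(f)}$ immediately. For property (4), the statement $\tau(\partial_2 f)=0$ is essentially by inspection since the zeroth Fourier coefficient of $\partial_2 f$ is $2\pi i\cdot 0\cdot f_0 = 0$; and $\tau(\partial_1 f)=\int_\reals f_0'(u)\,du=0$ follows from $f_0\in\mathcal{S}(\reals)$ via the fundamental theorem of calculus.

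For property (2), I would apply Proposition~\ref{prop:prodh} and the formula of the involution to get
\begin{align*}
(f^\ast \prodh f)_0(u) = \sum_{k\in\integers}(f^\ast)_k(u)\,f_{-k}(u+k\hbar)
= \sum_{k\in\integers}\overline{f_{-k}(u+k\hbar)}\,f_{-k}(u+k\hbar),
\end{align*}
which is a pointwise nonnegative sum of $|f_{-k}(u+k\hbar)|^2$; integrating termwise (and translating the variable in each summand) then gives $\tau(f^\ast\prodh f)=\sum_k\|f_{-k}\|^2_{L^2(\reals)}\geq 0$. The Schwartz decay in both $u$ and in $n$ makes the interchange of sum and integral routine, so this is not a serious obstacle.

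The main technical step is property (3). Starting from
\begin{align*}
\tau(f\prodh g) = \int_\reals \sum_{k\in\integers} f_k(u)\,g_{-k}(u+k\hbar)\,du,
\end{align*}
I would interchange sum and integral (justified by Schwartz decay of the coefficients and rapid decay in $|k|$), then in the $k$-th summand substitute $v=u+k\hbar$ to rewrite it as $\int_\reals f_k(v-k\hbar)\,g_{-k}(v)\,dv$. Relabelling $k\mapsto -k$ turns the resulting expression into $\sum_k\int_\reals g_k(v)\,f_{-k}(v+k\hbar)\,dv$, which is precisely $\tau(g\prodh f)$. The only genuinely delicate point is ensuring the double sum/integral is absolutely convergent so that the rearrangement and shift of variables are legitimate; this is guaranteed by the Schwartz property of all $f_k$ and $g_k$ together with the rapid decay in $k$ of the sequences $(f_k)$ and $(g_k)$ coming from smoothness in $t$.
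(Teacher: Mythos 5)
Your proposal is correct and matches the paper's proof in all essentials: (1) and (4) by inspection of the zeroth Fourier coefficient and the fundamental theorem of calculus, (2) by writing $(f^\ast\prodh f)_0(u)$ as $\sum_k|f_{-k}(u+k\hbar)|^2$, and (3) by relabelling $k\mapsto-k$ and translating the integration variable (you do these two steps in the opposite order from the paper, but they commute). Your remarks on justifying the interchange of sum and integral via Schwartz decay, and the identification $\tau(f^\ast\prodh f)=\sum_k\|f_{-k}\|^2_{L^2}$, are slightly more explicit than the paper but add nothing structurally new.
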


\begin{proof}
  It is immediate to see that
  $\tau(f^\ast)=\overline{\tau(f)}$. A direct computation yields
  \begin{align*}
    \tau(f\prodh g)
    &=\int_{\reals}\sum_{k\in\integers}f_{k}(u)g_{-k}(u+k\hbar)du
      =\int_{\reals}\sum_{k\in\integers}g_{k}(u-k\hbar)f_{-k}(u)du\\
    &=\int_{\reals}\sum_{k\in\integers}
      g_k(v)f_{-k}(v+k\hbar)dv=\tau(g\prodh f).
  \end{align*}
  Furthermore, one finds that
  \begin{align*}
    \tau(\partial_1f) = \int_{\reals}f'_0(u)du = \bracketa{f_0(u)}_{-\infty}^\infty=0
  \end{align*}
  as well as
  \begin{align*}
    \tau(\partial_2f) = \tau\parac{\sum_{n\in\integers}nf_n(u)W^n}
    =\int_\reals 0\cdot f_0(u)du = 0.
  \end{align*}
  Finally, we check that
  \begin{align*}
    \tau(f^\ast\prodh f)
    &= \sum_{k\in\integers}f^\ast_{k}f_{-k}(u+k\hbar)
      = \sum_{k\in\integers}|f_{-k}(u+k\hbar)|^2\geq 0,
  \end{align*}
  which completes the proof of the statements.
\end{proof}

\noindent
From now on we shall drop the cumbersome notation $f\prodh g$ and
simply write $fg$ when no confusion can arise.

For the noncommutative torus, there exists a convenient cyclic
2-cocycle which can be evaluated on 2-forms. For the noncommutative
cylinder, one can make use of a similar construction. The cyclic
2-cocycle below will be used in the next section in order to compute
``Chern numbers'' of a class of projective modules.

\begin{proposition}\label{prop:cyclic.cocycle}
  For $f_0,f_1,f_2\in\Chinf$ we set
  \begin{align*}
    \Psi(f_0,f_1,f_2) = \frac{1}{2\pi i}\tau\paraa{f_0(\partial_1f_1)(\partial_2f_2)-f_0(\partial_2f_1)(\partial_1f_2)}.
  \end{align*}
  Then $\Psi$ is a cyclic,  
  \begin{equation*}
    \Psi(f_2,f_0,f_1)=\Psi(f_0,f_1,f_2),
  \end{equation*}
 Hochschild 2-cocycle,
  \begin{align*}
    \Psi(f_0f_1,f_2,f_3)-\Psi(f_0,f_1f_2,f_3)+\Psi(f_0,f_1,f_2f_3)-\Psi(f_3f_0,f_1,f_2) = 0 ,
  \end{align*}
  for all $f_0,f_1,f_2,f_3\in\Chinf$.
\end{proposition}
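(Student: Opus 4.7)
The plan is to verify the two properties separately, relying entirely on the toolkit established in the excerpt: trace cyclicity $\tau(fg)=\tau(gf)$, the vanishing $\tau(\partial_i f)=0$ for $i=1,2$, the Leibniz rule for $\partial_1,\partial_2$, and the commutativity $[\partial_1,\partial_2]=0$. Combining $\tau\circ\partial_i=0$ with Leibniz gives the integration-by-parts identity $\tau(f\,\partial_i g)=-\tau((\partial_i f)\,g)$, which will be the main tool.

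For cyclicity, I start from
\[
\Psi(f_2,f_0,f_1)=\frac{1}{2\pi i}\tau\paraa{f_2(\partial_1 f_0)(\partial_2 f_1)-f_2(\partial_2 f_0)(\partial_1 f_1)},
\]
move $f_2$ to the right in each summand by trace cyclicity, and then integrate by parts to transfer $\partial_1$ off $f_0$ in the first term and $\partial_2$ off $f_0$ in the second. Expanding Leibniz on the resulting products produces four terms; the two containing $\partial_1\partial_2 f_1$ and $\partial_2\partial_1 f_1$ cancel thanks to $[\partial_1,\partial_2]=0$, and the two that survive reassemble into $\Psi(f_0,f_1,f_2)$.

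For the Hochschild identity, it is cleaner to split $\Psi=\Psi_1-\Psi_2$, where $\Psi_1(f_0,f_1,f_2)=\frac{1}{2\pi i}\tau(f_0\,\partial_1 f_1\,\partial_2 f_2)$ and $\Psi_2$ is obtained by swapping $\partial_1\leftrightarrow\partial_2$, and to verify that each $\Psi_i$ is individually a Hochschild cocycle. For $\Psi_1$, I expand $\partial_1(f_1 f_2)$ and $\partial_2(f_2 f_3)$ in the middle two summands of $b\Psi_1(f_0,f_1,f_2,f_3)$ via Leibniz. Four of the six resulting traces pair up and cancel algebraically without invoking any further identities, leaving
\[
\tau(f_0\,\partial_1 f_1\,\partial_2 f_2\,f_3)-\tau(f_3 f_0\,\partial_1 f_1\,\partial_2 f_2),
\]
which vanishes by trace cyclicity applied to the factor $f_3$. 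The same argument, with the roles of $\partial_1$ and $\partial_2$ interchanged, handles $\Psi_2$.

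The main subtlety is the cyclicity step: the cancellation of the double-derivative terms genuinely needs $[\partial_1,\partial_2]=0$, and without it $\Psi$ would only be cyclic up to a coboundary. The Hochschild verification, by contrast, is purely formal and uses neither commutativity of derivations nor the hermiticity of the $\partial_i$; it shows that $\Psi_1$ and $\Psi_2$ are each Hochschild cocycles, while only their antisymmetric combination acquires the cyclic symmetry.
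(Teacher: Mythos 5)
Your proof is correct and follows essentially the same route as the paper. For cyclicity, the paper applies Leibniz to rewrite $f_2\partial_i f_0=\partial_i(f_2f_0)-(\partial_i f_2)f_0$, observes that the $(\partial_i f_2)f_0$ pieces reassemble (via trace cyclicity) into $\Psi(f_0,f_1,f_2)$, and then kills the $\partial_i(f_2f_0)$ pieces by $\tau\circ\partial_i=0$ together with $[\partial_1,\partial_2]=0$; you run the same three ingredients in a slightly different order (cyclicity first, then integration by parts, then Leibniz), arriving at the identical cancellation of the $\partial_1\partial_2 f_1$ versus $\partial_2\partial_1 f_1$ terms. For the Hochschild cocycle identity the paper simply asserts it is a straightforward Leibniz-plus-trace-cyclicity expansion; your decomposition $\Psi=\Psi_1-\Psi_2$ and the observation that each summand is separately a Hochschild cocycle (using only Leibniz and $\tau(fg)=\tau(gf)$, with no appeal to $[\partial_1,\partial_2]=0$ or hermiticity) is a cleaner way to organize that same computation and makes explicit which hypotheses are actually used where. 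No gaps.
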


\begin{proof}
  Let us first show that $\Psi$ is cyclic.
  By using
  $\tau(fg)=\tau(gf)$ one finds that
  \begin{align*}
	  &\quad\ 2\pi i\Psi(f_2,f_0,f_1)
    = \tau\bracketb{f_2(\partial_1f_0)(\partial_2f_1)-f_2(\partial_2f_0)(\partial_1f_1)}\\
           &= \tau\bracketb{(\partial_1f_2f_0)(\partial_2f_1)-(\partial_2f_2f_0)(\partial_1f_1)-(\partial_1f_2)f_0(\partial_2f_1)+(\partial_2f_2)f_0(\partial_1f_1)}\\
           &=\tau\bracketb{(\partial_1f_2f_0)(\partial_2f_1)-(\partial_2f_2f_0)(\partial_1f_1)}+2\pi i\Psi(f_0,f_1,f_2),
  \end{align*}
  and since $\tau(\partial_1f)=\tau(\partial_2f)=0$ (by Proposition~\ref{prop:tau.properties}) it follows that
  \begin{align*}
    2\pi i\Psi(f_2,f_0,f_1)
    &=2\pi i\Psi(f_0,f_1,f_2)
      -\tau\bracketb{f_2f_0(\partial_1\partial_2f_1-\partial_2\partial_1f_1)}\\
    &=2\pi i\Psi(f_0,f_1,f_2)
  \end{align*}
  since $[\partial_1,\partial_2]=0$. To show that $\Psi$ is a cocycle, i.e. 
  \begin{align*}
    \Psi(f_0f_1,f_2,f_3)-\Psi(f_0,f_1f_2,f_3)+\Psi(f_0,f_1,f_2f_3)-\Psi(f_3f_0,f_1,f_2) = 0,
  \end{align*}
  is a straight-forward computation where one expands all derivatives
  of products of functions, and uses the fact that
  $\tau(fg)=\tau(gf)$.
\end{proof}

\section{Projections in the algebra}

\noindent
For the noncommutative torus, it is well known that
its algebra, in contrast to the commutative case,
contains nontrivial projections   which one may explicitly describe
\cite{r:irrational.rotation}. In this section, we will show that a
similar construction can be carried out for the noncommutative
cylinder. Namely, we shall construct projections   $p\in\Chinf$ of the
following form:
\begin{align*}
  p = g(u+\hbar)W + f(u) + g(u)W^{-1}.
\end{align*}

\begin{proposition}\label{prop:fg.conditions}
  Let $f,g\in \mathcal{S}(\reals)$ be real-valued functions, and set
\begin{align*}
  p = g(u+\hbar)W + f(u) + g(u)W^{-1}.
\end{align*}
Then $p^\ast=p$. Moreover $p^2=p$ if the functions $f$ and $g$ satisfy
\begin{align}
  &g(u)g(u+\hbar) = 0\label{eq:psq.gghbar}\\
  &g(u)\paraa{1-f(u)-f(u-\hbar)} = 0\label{eq:psq.gff}\\
  &g(u)^2+g(u+\hbar)^2=f(u)-f(u)^2\label{eq:psq.ggff}
\end{align}
for all $u\in\reals$.
\end{proposition}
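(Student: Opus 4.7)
The plan is a direct verification in two stages. First I check that $p^{\ast}=p$ by applying the formula of Proposition~\ref{prop:star.structure} to each summand. Since $f$ and $g$ are real-valued, one has $(f(u))^{\ast}=f(u)$, $(g(u)W^{-1})^{\ast}=g(u+\hbar)W$, and $(g(u+\hbar)W)^{\ast}=g((u-\hbar)+\hbar)W^{-1}=g(u)W^{-1}$, so the three terms of $p$ are simply permuted and $p^{\ast}=p$.

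Second, I expand $p\prodh p$ using only the commutation rule \eqref{eq:tu.simple.com-2}, which I will apply in the two equivalent forms $W\,h(u)=h(u+\hbar)\,W$ and $W^{-1}h(u)=h(u-\hbar)\,W^{-1}$. Multiplying out the nine terms and collecting by powers of $W$, I expect to obtain
\begin{align*}
p^{2} &= g(u+\hbar)g(u+2\hbar)\,W^{2}
       + g(u+\hbar)\bigl(f(u)+f(u+\hbar)\bigr)\,W \\
      &\quad + \bigl(f(u)^{2}+g(u)^{2}+g(u+\hbar)^{2}\bigr)
       + g(u)\bigl(f(u)+f(u-\hbar)\bigr)\,W^{-1}
       + g(u)g(u-\hbar)\,W^{-2}.
\end{align*}

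Third, I compare this to $p=g(u+\hbar)W+f(u)+g(u)W^{-1}$ coefficient-wise in the basis $\{W^{n}\}_{n\in\integers}$: the coefficients of $W^{\pm 2}$ must vanish, the coefficients of $W^{\pm 1}$ must equal $g(u+\hbar)$ and $g(u)$ respectively, and the constant term must equal $f(u)$. The $W^{2}$ equation reads $g(u+\hbar)g(u+2\hbar)=0$, which after the substitution $u\mapsto u-\hbar$ becomes \eqref{eq:psq.gghbar}; the $W^{-2}$ equation reduces to the same identity. Similarly the $W$ equation $g(u+\hbar)(f(u)+f(u+\hbar))=g(u+\hbar)$ becomes \eqref{eq:psq.gff} after shifting $u\mapsto u-\hbar$, and the $W^{-1}$ equation is already in the form \eqref{eq:psq.gff}. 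Finally the constant-term equation rearranges to \eqref{eq:psq.ggff}.

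There is no real obstacle here: the proof is an exercise in bookkeeping with the shift rule. The only point that merits explicit mention in the write-up is the collapse of the $W^{\pm 2}$ pair (and of the $W^{\pm 1}$ pair) into a single condition after an affine shift in $u$, which is what makes three scalar identities on $\mathcal{S}(\reals)$ suffice rather than five.
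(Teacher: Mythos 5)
Your proof is correct and follows essentially the same route as the paper: compute $p^2$ by the commutation rule, collect coefficients of each $W^n$, and match them to $p$. The only difference is that you spell out explicitly why the $W^{\pm 2}$ and $W^{\pm 1}$ conditions each collapse to a single scalar identity after an affine shift in $u$, a step the paper leaves implicit.
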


\begin{proof}
  Since $f$ and $g$ are real-valued, using \eqref{eq:tu.simple.com-2} one immediately obtains
  \begin{align*}
    p^\ast = W^{-1}g(u+\hbar)+f(u)+Wg(u)
    =g(u)W^{-1}+f(u)+g(u+\hbar)W = p.
  \end{align*}
  Then, a straight-forward computation of $p^2$ gives
  \begin{align*}
	  p^2 &= g(u)g(u-\hbar)W^{-2}+\paraa{f(u)g(u)+g(u)f(u-\hbar)}W^{-1}\\
        &\quad +\paraa{f(u)g(u+\hbar)+g(u+\hbar)f(u+\hbar)}W+g(u+\hbar)g(u+2\hbar)W^2\\
        &\quad +g(u+\hbar)^2+f(u)^2+g(u)^2
  \end{align*}
  which indeed equals $p$ by using \eqref{eq:psq.gghbar}--\eqref{eq:psq.ggff}.
\end{proof}

\noindent
Let us now construct a particular class of projections satisfying the
requirements of Proposition~\ref{prop:fg.conditions}. Let $f_0$ be a
function increasing smoothly from $0$ to $1$ on the interval
$[0,\hbar]$, and define $f,g:\reals\to\reals$ as
\begin{align}
  &f(u) =
  \begin{cases}
    0 &\text{if }u\leq 0\text{ or }u\geq 2\hbar\\
    f_0(u) &\text{if }0\leq u\leq \hbar\\
    1-f_0(u-\hbar) &\text{if }\hbar\leq u\leq 2\hbar
  \end{cases}\label{eq:f.projector}\\
  &g(u) =
  \begin{cases}
    0 &\text{if }u\leq\hbar\text{ or }u\geq 2\hbar\\
    \sqrt{f(u)-f(u)^2} &\text{if }\hbar\leq u\leq 2\hbar.
  \end{cases}\label{eq:g.projector}
\end{align}
Next, for $n\geq 1$ we set
\begin{align*}
  &f_n(u) = \sum_{k=1}^nW^{-2k}f(u)W^{2k}\\
  &g_n(u) = \sum_{k=1}^nW^{-2k}g(u)W^{2k},
\end{align*}
resulting in $n$ shifted copies of the original functions, as depicted
in Figure~\ref{fig:fandg}. Note that $f_n$ and $g_n$ have compact
support being defined on $[0,2\hbar n]$, where they are
$2\hbar$-periodic by construction.
 \begin{figure}[h]
  \centering
  \includegraphics[width=\textwidth]{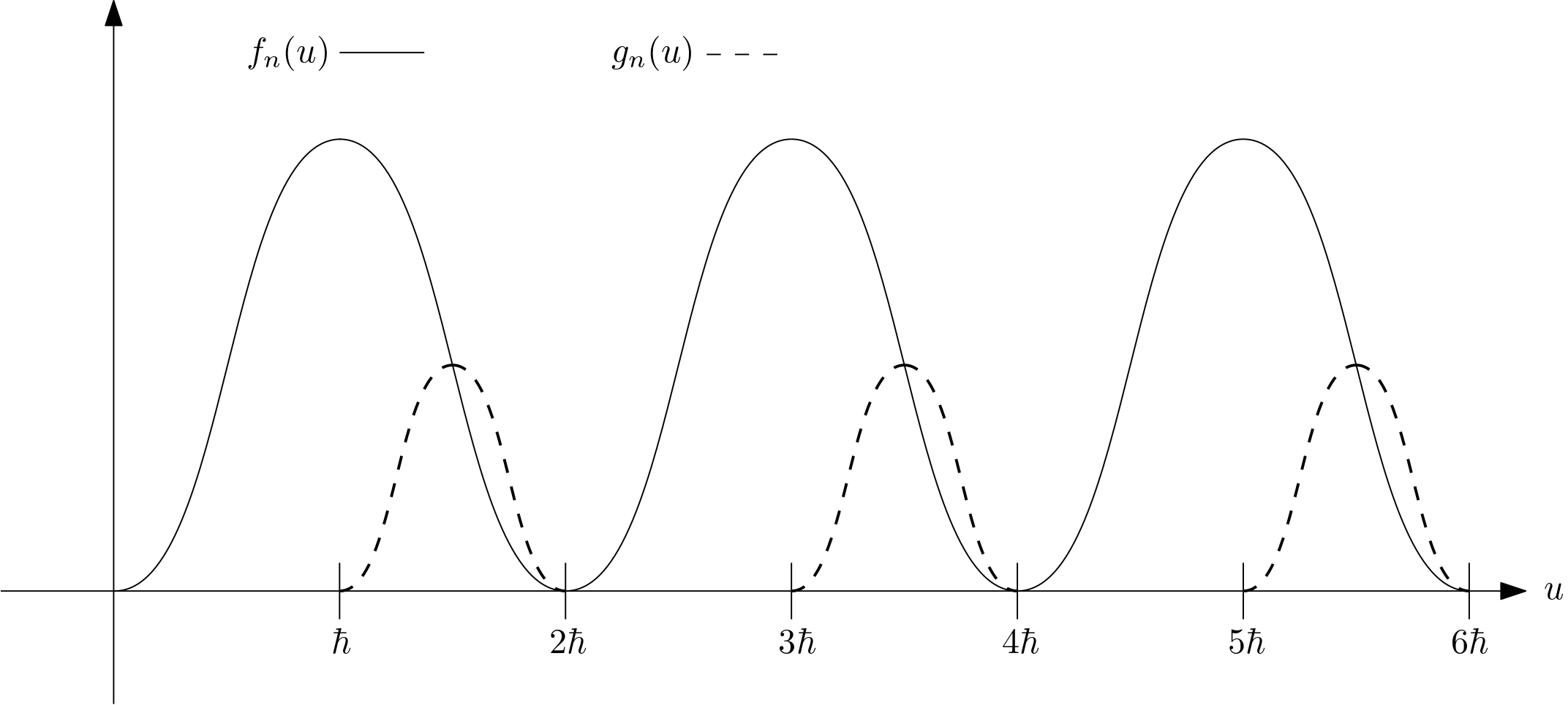}
  \caption{The functions $f_n$ and $g_n$ as constructed from \eqref{eq:f.projector} and \eqref{eq:g.projector}.}
  \label{fig:fandg}
\end{figure}

It is straightforward to check that $f_n$ and $g_n$ satisfy
\eqref{eq:psq.gghbar}, \eqref{eq:psq.gff} and \eqref{eq:psq.ggff}. For
instance, for $u\in[0,\hbar]$ it is immediate that
\eqref{eq:psq.gghbar} and \eqref{eq:psq.gff} holds since
$g(u)=0$. Moreover,
\begin{align*}
  g(u+\hbar)^2 &= f(u+\hbar)-f(u+\hbar)^2=1-f_0(u)-(1-f_0(u))^2\\
               &=f_0(u)-f_0(u)^2=f(u)-f(u)^2,
\end{align*}
showing that \eqref{eq:psq.ggff} is satisfied as well. Thus, one may
conclude from Proposition~\ref{prop:fg.conditions} that
\begin{align}
  p_n = g_n(u+\hbar)W + f_n(u) + g_n(u)W^{-1}\label{eq:def.pn}
\end{align}
is indeed a projection in $\Chinf$. Next, let us compute the trace of these projections.

\begin{proposition}\label{prop:trace.pn}
  Let $p_n$ be defined as above. Then $\tau(p_n)=n\hbar$.
\end{proposition}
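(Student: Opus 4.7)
The plan is to reduce everything to the trace formula \eqref{eq:tau.def} by showing that the only piece of $p_n$ that contributes is its $W^0$ component, namely $f_n(u)$. By definition of $\tau$ we have $\tau(p_n)=\int_\reals (p_n)_0(u)\,du$, so I first want to confirm that the terms $g_n(u+\hbar)W$ and $g_n(u)W^{-1}$ really are pure $W^{\pm 1}$ components (i.e., that $g_n(u)$ is a function of $u$ alone), and that $f_n(u)$ really is a function of $u$ alone.

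For this, I would apply the commutation rule \eqref{eq:tu.simple.com-2}: from $W^m\prodh h(u) = h(u+m\hbar)\prodh W^m$, one gets
\begin{align*}
  W^{-2k}\,h(u)\,W^{2k} = h(u-2k\hbar)
\end{align*}
for any $h\in\mathcal{S}(\reals)$. Applied to the defining sums for $f_n$ and $g_n$ this yields
\begin{align*}
  f_n(u) = \sum_{k=1}^{n} f(u-2k\hbar),\qquad g_n(u) = \sum_{k=1}^{n} g(u-2k\hbar),
\end{align*}
both of which are genuine functions of $u$. Therefore $\tau(p_n) = \int_\reals f_n(u)\,du$, and by translation invariance of the Lebesgue integral this equals $n\int_\reals f(u)\,du$.

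It remains to evaluate $\int_\reals f(u)\,du$. Since $f$ is supported on $[0,2\hbar]$ by \eqref{eq:f.projector}, I would split the integral as
\begin{align*}
  \int_\reals f(u)\,du = \int_0^\hbar f_0(u)\,du + \int_\hbar^{2\hbar}\paraa{1-f_0(u-\hbar)}\,du,
\end{align*}
and after the change of variable $v=u-\hbar$ in the second integral the integrand becomes $f_0(v)+1-f_0(v)=1$ on $[0,\hbar]$, giving $\int_\reals f(u)\,du = \hbar$. Combining this with the previous step yields $\tau(p_n)=n\hbar$.

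There is no real obstacle here; the argument is essentially bookkeeping. The only subtlety worth double-checking is the application of \eqref{eq:tu.simple.com-2} to reduce $W^{-2k}fW^{2k}$ to a translated function, which is what guarantees that $f_n$ and $g_n$ do not introduce any hidden $W^{\pm m}$ components that could affect the trace.
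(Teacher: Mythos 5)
Your proof is correct and follows essentially the same line as the paper: reduce to $\tau(p_n)=\int_\reals f_n(u)\,du$, observe that $f_n$ is just $n$ disjoint translates of $f$ so the integral is $n\int_\reals f$, and then split $\int_\reals f$ into the two pieces whose sum telescopes to $\hbar$. The one small thing you add beyond the paper's argument is the explicit check via \eqref{eq:tu.simple.com-2} that $W^{-2k}f(u)W^{2k}=f(u-2k\hbar)$ is a genuine function of $u$, which the paper leaves implicit; that is a harmless and reasonable clarification.
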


\begin{proof}
  Since $f_n$ is supported on $[0,2\hbar n]$, where it is
  $2\hbar$-periodic, it follows that
  \begin{align*}
    \tau(p_n) = \tau\parab{g_n(u+\hbar)W+f_n(u)+g_n(u)W^{-1}}= n\int_{0}^{2\hbar}f_n(u)du,
  \end{align*}
  and from the definition of $f_n$ one obtains
  \begin{align*}
    \tau(p_n) &= n\int_0^{\hbar}f_0(u)du + n\int_{\hbar}^{2\hbar}\paraa{1-f_0(u-\hbar)}du\\
	      &= n\int_{\hbar}^{2\hbar}du = n\hbar.\\[-4em]
  \end{align*}
\end{proof}

\noindent 
The curvature 2-form related to the projection $p_n$ is given by
$F_n=p_ndp_ndp_n$, which may be evaluated against the cyclic 2-cocycle
defined in Proposition~\ref{prop:cyclic.cocycle}.

\begin{proposition}\label{prop:cyclic.cocycle.pn}
For any projection $p_n$ as in \eqref{eq:def.pn}, one has $$\Psi(p_n,p_n,p_n) = n.$$
\end{proposition}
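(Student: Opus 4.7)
The plan is to compute $\Psi(p_n, p_n, p_n)$ directly. The key observation is that $p_n$ has the form $\alpha W + \beta + \gamma W^{-1}$ with $\alpha, \beta, \gamma$ real-valued (namely $g_n(u+\hbar), f_n, g_n$), and that $f_n, g_n$ satisfy the same identities \eqref{eq:psq.gghbar}--\eqref{eq:psq.ggff} as $f, g$ (since $p_n$ is itself a projection). After computing $\partial_1 p_n = g_n'(u+\hbar) W + f_n'(u) + g_n'(u) W^{-1}$ and $\partial_2 p_n = 2\pi i (g_n(u+\hbar) W - g_n(u) W^{-1})$, I would expand the triple products $p_n(\partial_1 p_n)(\partial_2 p_n)$ and $p_n(\partial_2 p_n)(\partial_1 p_n)$ using the commutation $W g(u) = g(u+\hbar) W$, tracking only the $W^0$ Fourier component since $\tau$ sees only that.

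The expansions produce many terms, but most simplify dramatically. I expect the $W^{\pm 2}$ contributions to vanish pointwise (using $g_n(u)g_n(u+\hbar) = 0$ together with a support argument forcing factors like $g_n(u+\hbar)g_n'(u+2\hbar)$ to vanish as well), the $W^0$ coefficient of $p_n \partial_1 p_n$ to collapse to $\tfrac12 f_n'$ via \eqref{eq:psq.ggff}, which gives $g_n(u+\hbar)^2 + f_n^2 + g_n^2 = f_n$, and the remaining $W^{\pm 1}$ cross terms of type $f_n g_n g_n'$ to be reorganized using integration by parts (boundary terms vanishing by compact support of $f_n, g_n$). If the bookkeeping goes as expected, both traces should collapse to scalar multiples of the same integral
\begin{equation*}
I_n := \int_\reals \bigl(g_n(u+\hbar)^2 - g_n(u)^2\bigr) f_n'(u)\, du,
\end{equation*}
combining with opposite signs to give $\Psi(p_n, p_n, p_n) = 3 I_n$.

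The remaining task is to evaluate $I_n$. Here I would use that $f_n(u) = \sum_{k=1}^n f(u-2k\hbar)$ and $g_n(u) = \sum_{k=1}^n g(u-2k\hbar)$ consist of translates of $f, g$ supported on the pairwise disjoint intervals $[2k\hbar, 2(k+1)\hbar]$. All cross terms in $I_n$ vanish by support, reducing it to $n$ times the single-translate integral $\int_\reals (g(u+\hbar)^2 - g(u)^2) f'(u)\, du$. On $[0, \hbar]$ one has $g(u) = 0$ and $g(u+\hbar)^2 = f_0(u) - f_0(u)^2$ from \eqref{eq:psq.ggff} with $f'(u) = f_0'(u)$, while on $[\hbar, 2\hbar]$ a symmetric calculation via $v = u - \hbar$ yields the same integrand $(f_0 - f_0^2) f_0'$. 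The single-translate integral evaluates to $2\int_0^\hbar (f_0 - f_0^2) f_0'\, du = 2\bigl[\tfrac12 f_0^2 - \tfrac13 f_0^3\bigr]_0^\hbar = \tfrac13$ by the boundary conditions $f_0(0) = 0$ and $f_0(\hbar) = 1$. Hence $\Psi(p_n, p_n, p_n) = 3 \cdot n \cdot \tfrac13 = n$.

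The main obstacle I anticipate is the careful tracking of the $\pm \hbar$ shifts introduced whenever a $W$ moves past a function of $u$: an arithmetic slip on these shifts could change the final coefficient in front of $I_n$ and hence the answer. Conceptually, however, the proof is just a guided interplay between the three projection identities, the commutation rule, and integration by parts, not a deep argument.
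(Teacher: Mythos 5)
Your plan is essentially the paper's own proof: both compute the $W^0$ Fourier coefficient of $p_n(\partial_1 p_n)(\partial_2 p_n) - p_n(\partial_2 p_n)(\partial_1 p_n)$, reduce $\Psi(p_n,p_n,p_n)$ to $3\int_\reals\bigl(g_n(u+\hbar)^2 - g_n(u)^2\bigr)f_n'(u)\,du$ (the paper writes this, after a change of variable, as $3\int g_n^2 f_n'(u-\hbar) - 3\int g_n^2 f_n'(u)$), and then evaluate via the identity $g^2 = f - f^2$ together with the disjointness of the shifted supports. The ``expected'' cancellations and the collapse of the $W^0$ coefficient of $p_n\partial_1 p_n$ to $\tfrac12 f_n'$ do hold, and your final evaluation $3\cdot n\cdot\tfrac13 = n$ agrees with the paper's $6\cdot n\cdot\tfrac16 = n$; the only caveat is that as written this is a sketch, and a full proof would need the explicit bookkeeping of the $W$-shifts that the paper carries out.
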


\begin{proof}
  As
  \begin{align*}
    &\Psi(p_n,p_n,p_n)
    =\frac{1}{2\pi i}\tau\paraa{p_n(\partial_1p_n)(\partial_2p_n)-p_n(\partial_2p_n)(\partial_1p_n)}    
  \end{align*}
  we compute
  \begin{align*}
    p_n &= g_n(u+\hbar)W + f_n(u) + g_n(u)W^{-1} \\
    \partial_1p_n &= g_n'(u+\hbar)W + f_n'(u) + g_n'(u)W^{-1}\\              
    \partial_2p_n &=2\pi i g_n(u+\hbar)W - 2\pi i g_n(u)W^{-1}.
  \end{align*}
  Writing
  \begin{align*}
    p_n(\partial_1p_n)(\partial_2p_n)-p_n(\partial_2p_n)(\partial_1p_n)
    =\sum_{n\in\integers}A_n(u)W^n
  \end{align*}
  one finds that
  \begin{align*}
    -\frac{1}{2\pi i} A_0 &= f_n(u)\parab{g_n(u+\hbar)Wg_n'(u)W^{-1}-g_n(u)W^{-1}g_n'(u+\hbar)W}\\
     &\quad -g_n(u+\hbar)Wg_n(u)W^{-1}f_n'(u)+g_n(u)W^{-1}g_n(u+\hbar)Wf_n'(u)\\
     &\quad -f_n(u)\parab{-g_n'(u+\hbar)Wg_n(u)W^{-1}+g_n'(u)W^{-1}g_n(u+\hbar)W}\\
     &\quad +g_n(u+\hbar)Wf_n'(u)g_n(u)W^{-1}-g_n(u)W^{-1}f_n'(u)g_n(u+\hbar)W\\
	  & = f_n(u)\parab{2g_n(u+\hbar)g_n'(u+\hbar)-2g_n(u)g_n'(u)} -g_n(u+\hbar)^2f_n'(u)\\
	  &\quad +g_n(u)^2f_n'(u)
       +g_n'(u+\hbar)^2f_n'(u+\hbar)-g_n(u)^2f_n'(u-\hbar)\\
	  &= \paraa{f_n(u)g_n(u+\hbar)^2-f_n(u)g_n(u)^2}'_u
       -2g_n(u+\hbar)^2f_n'(u)\\
	  &\quad +2g_n(u)^2f_n'(u) +g_n(u+\hbar)^2f_n'(u+\hbar)-g_n(u)^2f_n'(u-\hbar),
  \end{align*}
  giving
  \begin{align*}
    \Psi (p_n,p_n,p_n)=\frac{1}{2\pi i}&\tau(A_0)
	  =\tau\Big[ 2g_n(u+\hbar)^2f_n'(u)+g_n(u)^2f_n'(u-\hbar)\\
	  &\qquad -2g_n(u)^2f_n'(u)-
	   g_n(u+\hbar)^2f_n'(u+\hbar)\Big]\\
         &=3\!\int_{-\infty}^\infty g_n(u)^2f_n'(u-\hbar)du-\!3\!\int_{-\infty}^\infty g_n(u)^2f_n'(u)du.
  \end{align*}
  Since $g_n(u)=0$ for all $u\in[2\pi k,2\pi k+\hbar]$ and
  $f_n(u)=1-f_n(u-\hbar)$ for all $u\in[2k\hbar+\hbar,2k\hbar+2\hbar]$ for
  $k=0,\ldots,n-1$,
  \begin{align*}
    \int_{-\infty}^\infty g_n(u)^2f_n'(u)du=-\int_{-\infty}^\infty g_n(u)f'_n(u-\hbar)du,
  \end{align*}
  and it follows that
  \begin{align*}
    \Psi(p_n,p_n,p_n) &= 6\int_{-\infty}^\infty g_n(u)^2f'_n(u-\hbar)du
    =6n\int_{\hbar}^{2\hbar}g(u)^2f'(u-\hbar)du.
  \end{align*}
  Noting that for $u\in[\hbar,2\hbar]$
  \begin{align*}
	  g(u)^2 = f(u)-f(u)^2&=1-f(u-\hbar)-(1-f(u-\hbar))^2\\
	  &=f(u-\hbar)-f(u-\hbar)^2
  \end{align*}
  one computes
  \begin{align*}
    \Psi(p_n,p_n,p_n)
    &= 6n\int_{\hbar}^{2\hbar}\paraa{f(u-\hbar)-f(u-\hbar)^2}f'(u-\hbar)du\\
      &=6n\int_0^1(s-s^2)ds=6n\bracketc{\frac{1}{2}-\frac{1}{3}}=n,
  \end{align*}
  which proves the statement.
\end{proof}

\noindent
Considering the construction of the projection $p_n$, and the results
in Proposition~\ref{prop:trace.pn} and
Proposition~\ref{prop:cyclic.cocycle.pn}, it is natural to ask how the
direct sum of the projective modules defined by $p_n$ and $p_m$ is
related to the module defined by $p_{m+n}$. The next result shows that
they are indeed isomorphic.

\begin{proposition}\label{prop:pn.pm.sum}
  Let $n,m$ be integers with $n,m\geq 1$. Then
  \begin{align*}
    p_n\Chinf\oplus p_m\Chinf\simeq p_{n+m}\Chinf
  \end{align*}
  as (right) $\Chinf$-modules.
\end{proposition}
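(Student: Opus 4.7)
The plan is to decompose $p_{n+m}$ inside $\Chinf$ as a sum of two mutually orthogonal projections---namely $p_n$ and a $u$-shifted copy of $p_m$---and then to identify the module associated with the shifted copy with $p_m\Chinf$.

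For the first step I would observe that by the very construction of $f_n$ and $g_n$ in \eqref{eq:f.projector}--\eqref{eq:g.projector}, one has $f_{n+m}(u)=f_n(u)+f_m(u-2n\hbar)$ and $g_{n+m}(u)=g_n(u)+g_m(u-2n\hbar)$. Writing $\sigma_n(h)(u,t)=h(u-2n\hbar,t)$ for the $u$-shift---an algebra automorphism of $\Chinf$ by a direct check from Proposition~\ref{prop:prodh}---this reads $p_{n+m}=p_n+\sigma_n(p_m)$. The supports in $u$ of the Fourier coefficients of $p_n$ and of $\sigma_n(p_m)$ meet at a single point, at which all the relevant $f$'s and $g$'s vanish by construction, so a direct computation via Proposition~\ref{prop:prodh} yields $p_n\,\sigma_n(p_m)=0=\sigma_n(p_m)\,p_n$. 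Thus $p_n$ and $\sigma_n(p_m)$ are mutually orthogonal projections summing to $p_{n+m}$, and therefore
\begin{equation*}
p_{n+m}\Chinf=p_n\Chinf\oplus\sigma_n(p_m)\Chinf
\end{equation*}
as right $\Chinf$-modules.

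For the second step I would exhibit an explicit right $\Chinf$-module isomorphism $\Psi:\sigma_n(p_m)\Chinf\to p_m\Chinf$. Although $W^{\pm 2n}\notin\Chinf$, the formal operation of ``left multiplication by $W^{2n}$'' still makes sense as a linear bijection of the underlying vector space of $\Chinf$ via the explicit formula
\begin{equation*}
\Psi\parab{\textstyle\sum_k y_k(u)W^k}=\sum_k y_k(u+2n\hbar)W^{k+2n},
\end{equation*}
since the shifted Fourier coefficients remain Schwartz. Because this map is formally left multiplication, it commutes with right multiplication by $\Chinf$ and so is a right-module homomorphism; the identity $W^{2n}\sigma_n(p_m)=p_m W^{2n}$ (an elementary Fourier-level verification) then shows $\Psi(\sigma_n(p_m)\Chinf)\subseteq p_m\Chinf$, and the analogous ``left multiplication by $W^{-2n}$'' provides the inverse. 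Combining this with the orthogonal decomposition of the first step yields $p_{n+m}\Chinf\simeq p_n\Chinf\oplus p_m\Chinf$ as required.

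The main obstacle lies in this second step: one would like to invoke that inner automorphisms preserve right-module structure, but the implementing elements $W^{\pm 2n}$ do not live in $\Chinf$. The fix is to take the Fourier-level formula above as the \emph{definition} of $\Psi$, and to verify both the right-module homomorphism property and the containment $\Psi(\sigma_n(p_m)\Chinf)\subseteq p_m\Chinf$ directly from that formula; the rest of the argument is routine.
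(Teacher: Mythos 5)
Your proof follows essentially the same route as the paper's: the paper likewise writes $p_{n+m}=p_n+\tilde p_m$ with $\tilde p_m=W^{-2n}p_mW^{2n}$ (which coincides with your $\sigma_n(p_m)$), shows $p_n\tilde p_m=0$ by the same disjoint-support argument, and then identifies $\tilde p_m\Chinf\simeq p_m\Chinf$ before invoking the orthogonal-projection decomposition. Your explicit Fourier-level construction of the isomorphism $\Psi$ is a welcome sharpening of the paper's brief appeal to unitary equivalence---since $W^{\pm 2n}\notin\Chinf$, the map genuinely needs the spelled-out definition you give---but this is a refinement of detail rather than a different approach.
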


\begin{proof}
  Let $p_n$ and $p_m$ be given as
  \begin{align*}
    &p_n=g_n(u+\hbar)W+f_n(u)+g_n(u)W^{-1}\\
    &p_m=g_m(u+\hbar)W+f_m(u)+g_m(u)W^{-1}
  \end{align*}
  and introduce
  \begin{align*}
	  \tilde{p}_m = W^{-2n}p_mW^{2n} &= g_m(u+\hbar-2n\hbar)W+f_m(u-2n\hbar)+g_m(u-2n\hbar)W^{-1}.
  \end{align*}
  Since $\tilde{p}_m$ is unitarily equivalent to $p_m$, the modules
  $p_m\Chinf$ and $\tilde{p}_m\Chinf$ are isomorphic and, furthermore, it is
  clear that $p_n+\tilde{p}_{m} = p_{n+m}$. Next, let us show that $p_n$
  and $\tilde{p}_m$ are orthogonal; i.e. that $p_n\tilde{p}_m=0$.
  Introduce
  \begin{align*}
    &\gt_m(u) = g_m(u-2n\hbar)\\
    &\ft_m(u) = f_m(u-2n\hbar)
  \end{align*}
  and note that
  \begin{align*}
    g_n(u)\gt_m(u)=f_n(u)\ft_m(u)=g_n(u)\ft_m(u)=f_n(u)\gt_m(u)=0
  \end{align*}
  since $\supp(f_n,g_n)\subseteq(0,2n\hbar)$ and
  $\supp(\ft_m,\gt_m)\subseteq(2n\hbar,2(n+m)\hbar)$ are
  disjoint. Using these facts, one finds that
  \begin{align*}
    p_n\tilde{p}_m
    &= g_n(u+\hbar)\gt_m(u+2\hbar)W^2 + f_n(u)\gt_m(u+\hbar)W\\
      &\quad +g_n(u)\ft_m(u-\hbar)W^{-1}+g_n(u)\gt_m(u-\hbar)W^{-2}.
  \end{align*}
  First of all, it is clear that $g_n(u)\ft_m(u-\hbar)=0$ and
  $g_n(u)\gt_m(u-\hbar)=0$ since $u-\hbar<2n\hbar$ whenever
  $u\in\supp g_n\subseteq(0,2n\hbar)$. Furthermore, it also follows
  that $g_n(u+\hbar)\gt_m(u+2\hbar)=0$ and $f_n(u)\gt_m(u+\hbar)=0$
  since $\gt_m(u)=0$ for $u\in[2n\hbar,(2n+1)\hbar]$. Thus, we
  conclude that $p_n\tilde{p}_m=0$.  For orthogonal projections, 
  \begin{align*}
    p_n\Chinf\oplus\tilde{p}_m\Chinf\simeq (p_n+\tilde{p}_m)\Chinf,
  \end{align*}
  and in combination with the previous arguments one obtains
  \begin{align*}
    p_n\Chinf\oplus p_m\Chinf\simeq
    p_n\Chinf\oplus \tilde{p}_m\Chinf\simeq
    (p_n+\tilde{p}_m)\Chinf \simeq p_{n+m}\Chinf,
  \end{align*}
  which proves the desired result.
\end{proof}

\noindent
Let us discuss these results from the perspective of $K$-theory. In
\cite{vs:nc.cylinder}, $K_0$ of the noncommutative cylinder was shown
to be isomorphic to $\integers$. Since the algebra is nonunital, one
then expects that there exists a countable class of nontrivial
projections  . In this section we have constructed projections   $p_n$ (for
each $n\geq 1$), and Proposition~\ref{prop:trace.pn} shows that if
$m\neq n$ then $p_n$ and $p_m$ are not equivalent. Moreover, from
Proposition~\ref{prop:pn.pm.sum} it follows that the map
$p_n\mapsto n$ respects the group structure of the integers, and we conclude that
$p_n$ represents the $K_0$ class labeled by $n$. In this sense, one
may consider the projection $p_1$ to be a generator of $K_0$.

\section{Bimodules}\label{sec:bimodules}

\noindent
We now construct $\Chinf$-modules on the space of Schwartz functions
in one discrete and one real variable in analogy with the
noncommutative torus. We show that one may construct left and right
$\Chinf$-modules, as well as bimodules, depending on a set of
parameters. Furthermore, it turns out that these modules are in fact
isomorphic to a number of copies of the algebra itself.  To begin
with, for $\xi,\eta\in\mathcal{S}(\reals\times\integers)$ set
\begin{align*}
  &(\xi,\eta)_L = \sum_{k\in\integers}\int_{\reals}
  \xi(x,k)\overline{\eta(x,k)}dx\\
  &(\xi,\eta)_R = \sum_{k\in\integers}\int_{\reals}
    \overline{\xi(x,k)}\eta(x,k)dx
        =\overline{(\xi,\eta)}_L .
\end{align*}
The corresponding left module structure is given in the following
result.

\begin{proposition}\label{prop:left.module}
  Let $\lambda_0,\lambda_1,\eps,\hbar\in\reals$ and $r\in\integers$ be such
  that $\lambda_0\eps+\lambda_1r = -\hbar$. \\ For $f=\sum_{n\in\integers}f_n(u)W^n$
  set
  \begin{align}\label{eq:Ch.left.module.action}
    (f \xi) (x,k)  =\sum_{n\in\integers}f_n(\lambda_0x+\lambda_1k)\xi(x-n\eps,k-nr)
  \end{align}
  for $\xi\in\mathcal{S}(\reals\times\integers)$. Then
  $\mathcal{S}(\reals\times\integers)$ is a left $\Chinf$-module such
  that
  \begin{align*}
    (f\xi,\eta)_L = (\xi,f^\ast\eta)_L
  \end{align*}
  for all $f\in\Chinf$.
\end{proposition}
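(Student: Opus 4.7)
The plan is to verify both the module property $((fg)\xi) = (f(g\xi))$ and the adjoint relation by a direct computation, with the constraint $\lambda_0\eps + \lambda_1 r = -\hbar$ being precisely what aligns the action \eqref{eq:Ch.left.module.action} with the twisted product of Proposition~\ref{prop:prodh}.

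First I would establish the module property by applying the action twice. Writing out
\begin{align*}
(f(g\xi))(x,k) = \sum_{n,m} f_n(\lambda_0 x+\lambda_1 k)\,g_m\paraa{\lambda_0(x-n\eps)+\lambda_1(k-nr)}\,\xi(x-(n+m)\eps,\, k-(n+m)r),
\end{align*}
the argument of $g_m$ becomes $\lambda_0 x+\lambda_1 k - n(\lambda_0\eps+\lambda_1 r) = \lambda_0 x+\lambda_1 k + n\hbar$ thanks to the constraint. Setting $u = \lambda_0 x+\lambda_1 k$ and reindexing with $N = n+m$, the coefficient multiplying $\xi(x-N\eps,k-Nr)$ becomes $\sum_n f_n(u)\,g_{N-n}(u+n\hbar)$, which is exactly $(fg)_N(u)$ by Proposition~\ref{prop:prodh}. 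Hence $(f(g\xi))(x,k) = ((fg)\xi)(x,k)$.

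Next I would turn to the adjoint property. Expanding $(f\xi,\eta)_L$ from its definition and performing the translation $x \mapsto x+n\eps$, $k \mapsto k+nr$ moves the shift from $\xi$ onto $\eta$. The argument of $f_n$ then transforms by $n(\lambda_0\eps+\lambda_1 r) = -n\hbar$, producing $f_n(\lambda_0 x+\lambda_1 k - n\hbar)$. Renaming the summation variable $n \mapsto -m$ and recognising the formula $f^*_m(u) = \overline{f_{-m}(u+m\hbar)}$ from Proposition~\ref{prop:star.structure}, the sum reassembles as $(f^*\eta)(x,k)$ in the form of \eqref{eq:Ch.left.module.action} applied to $f^*$, yielding $(\xi,f^*\eta)_L$.

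The only real obstacle is bookkeeping rather than substance: one must track carefully how the characteristic $n\hbar$-shift appearing in both Proposition~\ref{prop:prodh} and Proposition~\ref{prop:star.structure} is reproduced on the module side by the $(x,k)$-translation, and the constraint $\lambda_0\eps+\lambda_1 r = -\hbar$ is exactly the synchronisation condition that makes the two match. No analytic subtlety arises since all sums and integrals converge absolutely on Schwartz functions, which also justifies the interchange of summation and integration used in the change of variables.
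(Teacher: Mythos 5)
Your proposal is correct and follows essentially the same approach as the paper: the module compatibility is verified by expanding the double action, using the constraint $\lambda_0\eps+\lambda_1 r=-\hbar$ to reproduce the $n\hbar$-shift, and reindexing to match the twisted product from Proposition~\ref{prop:prodh}; the adjoint property is verified by the same change of variables $(x,k)\mapsto(x+n\eps,k+nr)$ and reindexing $n\mapsto -m$ that the paper uses (the paper runs this manipulation from $(\xi,f^\ast\eta)_L$ toward $(f\xi,\eta)_L$, whereas you run it the other way, which is an immaterial difference).
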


\begin{proof}
  In order for \eqref{eq:Ch.left.module.action} to define a module
  action, one has to check that it respects the relations in the
  algebra; i.e. $\paraa{(fg)\xi}(x,k)=\paraa{f(g\xi)}(x,k)$. One
  finds that
  \begin{align*}
	  &\quad\  \paraa{f(g\xi)}(x,k)
	     = \sum_{n\integers}f_n(\lambda_0x+\lambda_1k)(g\xi)(x-n\eps,k-nr)\\
           &=\!\sum_{n,m\in\integers}\!f_n(\lambda_0x+\lambda_1k)
             g_m\paraa{\lambda_0(x-n\eps)+\lambda_1(k-nr)}\\
	     &\qquad\quad\times\xi\paraa{x-(n+m)\eps,k-(n+m)r}\\
           &=\!\sum_{n,l\in\integers}\!f_n(\lambda_0x+\lambda_1k)
             g_{l-n}\paraa{\lambda_0x+\lambda_1k-n(\lambda_0\eps+\lambda_1r)}\xi\paraa{x-l\eps,k-lr}\\
           &=\!\sum_{n,l\in\integers}\!f_n(\lambda_0x+\lambda_1k)
             g_{l-n}\paraa{\lambda_0x+\lambda_1k+n\hbar}\xi\paraa{x-l\eps,k-lr},
  \end{align*}
  by using that $\lambda_0\eps+\lambda_1r=-\hbar$.  On the other hand
  \begin{align*}
	  &\quad\ \paraa{(fg)\xi}(x,k)
    =\parac{\sum_{n,m\in\integers}f_m(u)g_{n-m}(u+m\hbar)W^n\xi}(x,k)\\
    &=\sum_{n,m\in\integers}f_m(\lambda_0x+\lambda_1k)g_{n-m}(\lambda_0x+\lambda_1k+m\hbar)\xi(x-m\eps,k-mr),
  \end{align*}
  which is seen to equal $\paraa{f(g\xi)}(x,k)$. Next, let us show that $(f\xi,\eta)_L=(\xi,f^\ast\eta)_L$ by first computing
  \begin{align*}
    (f\xi,\eta)_L
    &= \sum_{k\in\integers}\int_{\reals}\parac{\sum_{n\in\integers}f_n(u)W^n\xi}(x,k)\overline{\eta(x,k)}dx\\
    &= \sum_{k,n\in\integers}\int_{\reals}f_n(\lambda_0x+\lambda_1k)\xi(x-n\eps,k-nr)\overline{\eta(x,k)}dx.
  \end{align*}
  Let us compare this with
  \begin{align*}
    (\xi,f^\ast \eta)_L
    &= \sum_{k\in\integers}\int_\reals\xi(x,k)\overline{\parac{\sum_{n\in\integers}\overline{f_{-n}(u+n\hbar)}W^n\eta}(x,k)}dx\\
    &= \sum_{k,n\in\integers}\int_\reals\xi(x,k)f_{-n}(\lambda_0x+\lambda_1k+n\hbar)\overline{\eta(x-n\eps,k-nr)}dx.
  \end{align*}
  Setting $m=-n$ gives
  \begin{align*}
    (\xi,f^\ast \eta)_L
    &= \sum_{k,m\in\integers}\int_\reals\xi(x,k)f_{m}(\lambda_0x+\lambda_1k-m\hbar)\overline{\eta(x+m\eps,k+mr)}dx
  \end{align*}
  and changing the integration variable to $y=x+m\eps$ yields
  \begin{align*}
    (\xi,f^\ast \eta)_L
    &=\!\! \sum_{k,m\in\integers}\int_\reals\xi(y\!-\!m\eps,k)f_{m}(\lambda_0y\!+\!\lambda_1k\!-\!\lambda_0m\eps\!-\!m\hbar)\overline{\eta(y,k\!+\!mr)}dy.    
  \end{align*}
  Finally, we set $l=k+mr$ and use that $\lambda_0\eps+\lambda_1r=-\hbar$ to obtain
  \begin{align*}
    (\xi,f^\ast \eta)_L
    &= \sum_{l,m\in\integers}\int_\reals\xi(y-m\eps,l-nr)f_{m}(\lambda_0y+\lambda_1l)\overline{\eta(y,l)}dy,    
  \end{align*}
  which equals $(f\xi,\eta)_L$.
\end{proof}

\noindent
In the same way, one may construct a right module structure on
$\mathcal{S}(\reals\times\integers)$. The proof is analogous to that of
Proposition~\ref{prop:left.module}.

\begin{proposition}\label{prop:right.module}
  Let $\mu_0,\mu_1,\eps',\hbar'\in\reals$ and $r'\in\integers$ such
  that $\mu_0\eps'+\mu_1r' = \hbar'$.  \\
  For
  $f=\sum_{n\in\integers}f_n(u)W^n$ set
  \begin{align}\label{eq:Ch.right.module.action}
    (\xi f)(x,k)
    =\sum_{n\in\integers}f_n(\mu_0x+\mu_1k-n\hbar')\xi(x-n\eps',k-nr')
  \end{align}
  for $\xi\in\mathcal{S}(\reals\times\integers)$. Then
  $\mathcal{S}(\reals\times\integers)$ is a right $\Chpinf$-module
  such that
  \begin{align*}
    (\xi f,\eta)_R = (\xi,\eta f^\ast)_R
  \end{align*}
  for all $f\in\Chpinf$.
\end{proposition}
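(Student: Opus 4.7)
The proof follows exactly the template of Proposition~\ref{prop:left.module}, with two modifications: the shift $-n\hbar'$ in the argument of $f_n$ in the right action \eqref{eq:Ch.right.module.action} replaces the absence of such a shift on the left, and the compatibility condition now reads $\mu_0\eps'+\mu_1r'=\hbar'$ rather than $\lambda_0\eps+\lambda_1r=-\hbar$. The sign flip in the condition precisely reflects the sign flip in the commutation rule \eqref{eq:tu.simple.com-2} between a left and a right action, so the proof should go through mutatis mutandis. My plan is to verify in turn that \eqref{eq:Ch.right.module.action} defines a right action, and that it satisfies the stated adjoint property.

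For the module axiom $\paraa{(\xi f)g}(x,k)=\paraa{\xi(fg)}(x,k)$, I would expand the right-hand side using the product formula of Proposition~\ref{prop:prodh} (with $\hbar$ replaced by $\hbar'$, since $f,g\in\Chpinf$) and expand the left-hand side by applying the action of $f$ first and then that of $g$. The key algebraic step is that, in the iterated action, one needs to evaluate $f_m$ at the shifted point $\mu_0(x-n\eps')+\mu_1(k-nr')-m\hbar'$, and the defining condition $\mu_0\eps'+\mu_1r'=\hbar'$ collapses this to $\mu_0x+\mu_1k-(n+m)\hbar'$. After a compatible relabelling of the inner summation index (for instance setting $l=n+m$ on the iterated side, or equivalently writing $q=n-p$ on the product side), both expressions reduce to the same double sum and equality follows.

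For the adjoint property $(\xi f,\eta)_R=(\xi,\eta f^\ast)_R$, I would write both sides explicitly, using Proposition~\ref{prop:star.structure} to express $(f^\ast)_n(u)=\overline{f_{-n}(u+n\hbar')}$. The $+n\hbar'$ appearing in the $\ast$-structure exactly cancels the $-n\hbar'$ appearing in the right action of $f^\ast$, producing the clean formula
\[
(\eta f^\ast)(x,k)=\sum_{n\in\integers}\overline{f_{-n}(\mu_0x+\mu_1k)}\,\eta(x-n\eps',k-nr').
\]
I would then compare the two sides via the triple substitution $m=-n$ in the sum, the change of variable $y=x+m\eps'$ in the integral, and the reindexing $l=k+mr'$; the compatibility condition $\mu_0\eps'+\mu_1r'=\hbar'$ is precisely what converts the argument of $f_m$ that appears into $\mu_0y+\mu_1l-m\hbar'$, matching the direct expansion of $(\xi f,\eta)_R$.

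The only real obstacle in either step is bookkeeping: both arguments involve several layers of index relabellings and variable changes in the integrals, and the linear relation $\mu_0\eps'+\mu_1r'=\hbar'$ must be invoked at the right moment to close the identifications. No genuinely new idea is required beyond Proposition~\ref{prop:left.module}, which is presumably why the authors describe the proof as analogous and omit the details.
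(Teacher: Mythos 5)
Your proposal is correct and follows the same route as the paper, which itself only remarks that the proof is analogous to Proposition~\ref{prop:left.module}. The key algebraic identifications you isolate (the collapse of the shifted argument of $f$ via $\mu_0\eps'+\mu_1r'=\hbar'$, the cancellation of $+n\hbar'$ against $-n\hbar'$ in $(\eta f^\ast)$, and the three reindexings $m=-n$, $y=x+m\eps'$, $l=k+mr'$) are exactly what is needed, and they do close the argument as you describe.
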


\noindent 
A (left or right) $\Chinf$-module constructed as above will be denoted
by $\E_{\hbar}$ with a suitable choice of parameters
  implicitly assumed. If the parameters of the left and
right module structures are compatible, $\mathcal{S}(\reals\times\integers)$
becomes a $(\Chinf,\Chpinf)$-bimodule.

\begin{proposition}\label{prop:bimodule}
  Let
  $\lambda_0,\mu_0,\lambda_1,\mu_1,\eps,\eps',\hbar,\hbar'\in\reals$
  and $r,r'\in\integers$ such that
  \begin{alignat*}{2}
    &\lambda_0\eps + \lambda_1r = -\hbar &\qquad
    &\mu_0\eps'+\mu_1r' = \hbar'\\
    &\lambda_0\eps' + \lambda_1 r' = 0 &
    &\mu_0\eps + \mu_1r = 0.
  \end{alignat*}
  Then $\mathcal{S}(\reals\times\integers)$ is a
  $(\Chinf,\Chpinf)$-bimodule with respect to the left and right
  actions defined in Proposition~\ref{prop:left.module} and
  Proposition~\ref{prop:right.module}.
\end{proposition}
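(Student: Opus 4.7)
The two module axioms for the left $\Chinf$-action and the right $\Chpinf$-action have already been verified in Proposition~\ref{prop:left.module} and Proposition~\ref{prop:right.module}, since the hypotheses $\lambda_0\eps+\lambda_1r=-\hbar$ and $\mu_0\eps'+\mu_1r'=\hbar'$ are exactly the conditions required there. Thus the only remaining thing to establish is the compatibility of the two actions, namely
\begin{align*}
  \paraa{(f\xi)g}(x,k) = \paraa{f(\xi g)}(x,k)
\end{align*}
for all $f\in\Chinf$, $g\in\Chpinf$ and $\xi\in\mathcal{S}(\reals\times\integers)$. This is where the two "crossed" compatibility conditions $\lambda_0\eps'+\lambda_1r'=0$ and $\mu_0\eps+\mu_1r=0$ will be used.

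The plan is to simply expand both sides. On the one hand,
\begin{align*}
  \paraa{(f\xi)g}(x,k)
  &= \sum_{n\in\integers}g_n(\mu_0x+\mu_1k-n\hbar')(f\xi)(x-n\eps',k-nr')\\
  &= \sum_{n,m}g_n(\mu_0x+\mu_1k-n\hbar')f_m\paraa{\lambda_0(x-n\eps')+\lambda_1(k-nr')}\\
  &\qquad\quad\times\xi(x-m\eps-n\eps',k-mr-nr'),
\end{align*}
and the condition $\lambda_0\eps'+\lambda_1r'=0$ collapses the argument of $f_m$ to $\lambda_0x+\lambda_1k$. On the other hand, expanding $f(\xi g)$ in the same fashion produces a double sum where the argument of $g_n$ is $\mu_0x+\mu_1k-m(\mu_0\eps+\mu_1r)-n\hbar'$, and the condition $\mu_0\eps+\mu_1r=0$ collapses this to $\mu_0x+\mu_1k-n\hbar'$. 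The two resulting double sums coincide term by term after relabeling.

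There is no substantive obstacle here; the proof is essentially a bookkeeping exercise tracking how the shifts $-n\eps'$, $-nr'$ in the left action and $-m\eps$, $-mr$ in the right action pass through the arguments of $f_m$ and $g_n$ respectively. The nontrivial content of the proposition lies in identifying the correct compatibility conditions, and once these are imposed the verification is routine. For this reason I would present the proof as the brief computation sketched above, omitting only the trivial reindexing, and explicitly noting which of the four conditions is invoked at each step.
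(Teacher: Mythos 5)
Your proof is correct and follows essentially the same route as the paper's: expand both $(f\xi)g$ and $f(\xi g)$ into double sums and observe that the two crossed conditions $\lambda_0\eps'+\lambda_1r'=0$ and $\mu_0\eps+\mu_1r=0$ each collapse one shifted argument, after which the sums coincide. The only difference is cosmetic — the paper writes the left element as $g$ and the right element as $f$, so the roles of the two conditions appear swapped relative to your presentation, but the computation is identical.
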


\begin{proof}
  In order for $\mathcal{S}(\reals\times\integers)$ to be a
  $(\Chinf,\Chpinf)$-bimodule, it must hold that
  \begin{align*}
    \paraa{g(\xi f)}(x,k) = \paraa{(g\xi)f}(x,k)
  \end{align*}
  for all $\xi\in\mathcal{S}(\reals\times\integers)$, $g\in\Chinf$ and
  $f\in\Chpinf$. One finds that
  \begin{align*}
	  &\quad\ \paraa{g(\xi f)}(x,k)
    = \sum_{n\in\integers}g_n(\lambda_0x+\lambda_1k)(\xi f)(x-n\eps,k-nr)\\
    &= \sum_{n,m\in\integers}g_n(\lambda_0x+\lambda_1k)
      f_m\paraa{\mu_0(x-n\eps)+\mu_1(k-nr)-m\hbar'}\\
    &\qquad\qquad\times \xi(x-n\eps-m\eps',k-nr-mr')\\
    &=\sum_{n,m\in\integers}g_n(\lambda_0x+\lambda_1k)
      f_m(\mu_0x+\mu_1k-m\hbar')\xi(x-n\eps-m\eps',k-nr-mr'),
  \end{align*}
  by using that $\mu_0\eps+\mu_1r = 0$. On the other hand
  \begin{align*}
    \paraa{(g\xi)f}(x,k)
	     & = \sum_{m\in\integers}f_m(\mu_0x+\mu_1k-m\hbar')
              (g\xi)(x-m\eps',k-mr')\\
            &=\sum_{m,n\in\integers}f_m(\mu_0x+\mu_1k-m\hbar')
              g_n\paraa{\lambda_0(x-m\eps')+\lambda_1(k-mr')}\\
            &\qquad\qquad\times\xi(x-m\eps'-n\eps,k-mr'-nr)\\
            &=\sum_{m,n\in\integers}g_n\paraa{\lambda_0x+\lambda_1k}
              f_m(\mu_0x+\mu_1k-m\hbar')\\
	      &\qquad\qquad\times\xi(x-m\eps'-n\eps,k-mr'-nr),
  \end{align*}
  since $\lambda_0\eps'+\lambda_1r'=0$. We conclude that
  $\paraa{g(\xi f)}(x,k) = \paraa{(g\xi)f}(x,k)$.
\end{proof}

\noindent
A bimodule defined as in Proposition~\ref{prop:bimodule} will be
denoted by $\Ehhp$, again tacitly assuming a choice of the parameters
$\lambda_0,\lambda_1,\mu_0,\mu_1,\eps,\eps'\in\reals$ and
$r,r'\in\integers$ satisfying the requirements in
Proposition~\ref{prop:bimodule}.  Note that one can construct
bimodules for arbitrary choices of $\hbar,\hbar'$ by choosing
e.g. $\eps,\eps',r,r'$ such that $\eps r'\neq \eps'r$ and setting
\begin{alignat*}{2}
  &\lambda_0 = -\frac{\hbar r'}{\eps r'-\eps'r}&\quad
  &\lambda_1 = \frac{\hbar\eps'}{\eps r'-\eps'r}\\
  &\mu_0= -\frac{\hbar'r}{\eps r'-\eps'r} &
  &\mu_1 = \frac{\hbar'\eps}{\eps r'-\eps'r}.
\end{alignat*}

\noindent
Let us now point out some obvious isomorphisms between modules defined
by different sets of parameters. To simplify the description, we make
the following definition.


\begin{definition}
  The vectors $(x_1,\ldots,x_n)\in\reals^n$ and
  $(y_1,\ldots,y_n)\in\reals^n$ are called \emph{$0$-compatible} if
  either $x_i=y_i=0$ or $x_i,y_i\neq 0$ for $i=1,\ldots,n$.
\end{definition}


\begin{proposition}\label{prop:module.param.iso}
  Let $\E_{\hbar}$ and $\tilde{\E}_{\hbar}$ be left $\Chinf$-modules
  (as in Proposition~\ref{prop:left.module}) defined by the parameters
  $(\lambda_0,\eps,\lambda_1,r)$ and
  $(\lambdat_0,\epst,\lambda_1,r)$, respectively. If
  $(\lambda_0,\eps)$ and
  $(\lambdat_0,\epst)$ are $0$-compatible then
  $\E_{\hbar}\simeq\tilde{\E}_{\hbar}$ as left $\Chinf$-modules.
\end{proposition}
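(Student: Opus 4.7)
The plan is to realize the isomorphism as a rescaling in the first (continuous) variable: I would take $\Phi: \E_\hbar \to \tilde\E_\hbar$ of the form $\Phi(\xi)(x,k) = \xi(cx, k)$ for some nonzero scalar $c\in\reals$ to be chosen. Any such $\Phi$ is automatically a linear bijection of $\mathcal{S}(\reals\times\integers)$ with inverse $\eta\mapsto\eta(x/c, k)$, so the substantive issue is to pick $c$ so that $\Phi$ intertwines the two left actions.

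First I would substitute the ansatz into \eqref{eq:Ch.left.module.action}. A direct computation gives
\begin{align*}
  \Phi(f\xi)(x,k) &= \sum_{n\in\integers} f_n(\lambda_0 cx + \lambda_1 k)\,\xi(cx - n\eps, k - nr),\\
  (f\Phi(\xi))(x,k) &= \sum_{n\in\integers} f_n(\lambdat_0 x + \lambda_1 k)\,\xi(cx - cn\epst, k - nr).
\end{align*}
These agree for all $f$ and $\xi$ precisely when $\lambda_0 c = \lambdat_0$ and $c\epst = \eps$. The two relations $\lambda_0\eps+\lambda_1 r=-\hbar$ and $\lambdat_0\epst+\lambda_1 r=-\hbar$ (which the parameters must satisfy by Proposition~\ref{prop:left.module}) immediately give the key identity $\lambda_0\eps = \lambdat_0\epst$, which is exactly the compatibility condition between the two equations for $c$.

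Finally I would exhibit such a $c$. Whenever $\lambda_0\neq 0$, 0-compatibility forces $\lambdat_0\neq 0$ and the first relation pins $c=\lambdat_0/\lambda_0$; whenever $\epst\neq 0$, it pins $c=\eps/\epst$. When $\lambda_0$ and $\epst$ are both nonzero, these two choices agree thanks to the identity above. When $\lambda_0=\lambdat_0=0$ but $\epst\neq 0$, only the second relation is nontrivial, and by 0-compatibility $\eps\neq 0$ so $c=\eps/\epst$ is a nonzero scalar; symmetrically if $\eps=\epst=0$ but $\lambda_0\neq 0$. In the fully degenerate case $\lambda_0=\lambdat_0=\eps=\epst=0$ the two module actions coincide outright and one may take $c=1$.

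The main (and only) subtlety is therefore the case analysis around vanishing parameters, which is exactly what the 0-compatibility hypothesis is tailored to bypass; once $c$ has been fixed, everything else reduces to the one-line intertwining verification above.
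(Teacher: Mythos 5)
Your proof is correct and follows essentially the same route as the paper: the same rescaling ansatz $\xi(x,k)\mapsto\xi(cx,k)$, the same two intertwining conditions $\lambda_0 c=\lambdat_0$ and $c\epst=\eps$, the same consistency identity $\lambda_0\eps=\lambdat_0\epst$ derived from the two $\hbar$-constraints, and the same case analysis on vanishing parameters which the $0$-compatibility hypothesis is designed to handle.
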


\begin{proof}
  Note that since
  $\lambda_0\eps+\lambda_1r=-\hbar=\lambdat_0\epst+\lambda_1r$, it
  follows that $\lambda_0\eps=\lambdat_0\epst$.  We shall proceed by
  defining module homomorphisms
  $\phi_{\tau}:\E_{\hbar}\to\tilde{\E}_{\hbar}$ as
  \begin{align*}
    \phi_{\tau}(\xi)(x,k) = \xi(\tau x,k)
  \end{align*}
  for $\tau\in\reals$. Note that $\phi_{\tau}$
  is a linear map and if $\tau\neq 0$ then $\phi_{\tau}$ is
  invertible. Now, let us derive conditions for $\phi_{\tau}$ to be
  a module homomorphism; thus, we demand that
  $\phi_{\tau}(f\xi)=f\phi_{\tau}(\xi)$ for all $f\in\Chinf$. To
  this end one computes
  \begin{align*}
    \phi_{\tau}(f\xi)(x,k)
    &= \sum_{n\in\integers}f_n(\lambda_0\tau x+\lambda_1k)\xi(\tau x-n\eps,k-nr)\\
    \paraa{f\phi_{\tau}(\xi)}(x,k)
    &=\sum_{n\in\integers}f_n(\lambdat_0x+\lambda_1k)\phi_{\tau}(\xi)(x-n\epst,
      k-nr)\\
    &=\sum_{n\in\integers}f_n(\lambdat_0x+\lambda_1k)\xi\paraa{\tau(x-n\epst),
      k-nr}.
  \end{align*}
  The above expressions are equal if
  \begin{align}\label{eq:lambdatildetaum}
    \lambda_0\tau = \lambdat_0\quad\text{and}\quad
    \eps=\tau\epst.
  \end{align}
  Note that since $(\lambda_0,\eps)$ and
  $(\lambdat_0,\epst)$ are $0$-compatible, either both
  sides of each equation are zero (i.e. trivially giving a solution)
  or both sides are non-zero (as long as $\tau\neq 0$). Thus, if
  $\lambda_0=\lambdat_0=0$ and $\eps=\epst=0$, then $\phi_{\tau}$ is an
  isomorphism for any $\tau\neq 0$. If $\lambda_0,\lambdat_0\neq 0$
  and $\eps=\epst=0$, one can set $\tau=\lambdat_0/\lambda_0$, solving
  \eqref{eq:lambdatildetaum} (and similarly in the case when
  $\eps,\epst\neq 0$ but $\lambda_0=\lambdat_0=0$). Now, for the case
  when $\lambda_0,\lambdat_0,\eps,\epst\neq 0$ one sets
  $\tau=\lambdat_0/\lambda_0$ and notes that
  \begin{align*}
    \tau\epst = \frac{\lambdat_0\epst}{\lambda_0}
    =\frac{\lambda_0\eps}{\lambda_0}=\eps
  \end{align*}
  giving a solution of \eqref{eq:lambdatildetaum}. Thus, it follows
  that under the assumptions given in the proposition, there exists a
  module isomorphism $\phi_\tau:\E_{\hbar}\to\tilde{\E}_{\hbar}$.
\end{proof}

\noindent
Somewhat surprisingly, it turns out that these modules are in fact isomorphic (as modules) to copies of the algebra
itself. More precisely, we formulate the statement as follows.

\begin{proposition}\label{prop:module.iso.r.algebra}
  Let $\E_\hbar$ be a left $\Chinf$-module defined by the parameters
  $\lambda_0,\lambda_1,\eps,r$ such that
  $\lambda_0\eps+\lambda_1r=-\hbar$ and $\lambda_0\neq 0$. For
  arbitrary $F\in(\Chinf)^r$ (considered as a left $\Chinf$-module) we
  write $F = (F^0,F^1,\ldots,F^{r-1})$ with $F^k\in\Chinf$ for
  $k=0,1,\ldots,r-1$, and introduce the components $F^k_n(u)$ via
  \begin{align*}
    F^k = \sum_{n\in\integers}F^k_n(u)W^n.
  \end{align*}
  Furthermore, for an integer $k$ we let $k_0\in\integers$ and
  $0\leq k_1\leq r-1$ be defined by $k=k_0r+k_1$.  Then the map
  $\phi:\paraa{\Chinf}^r\to\E_{\hbar}$, defined as
  \begin{align*}
    \phi(F)(x,k) = F^{k_1}_{k_0}(\lambda_0x+\lambda_1k),
  \end{align*}
  is an isomorphism of left $\Chinf$-modules.
\end{proposition}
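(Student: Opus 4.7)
The plan is to verify that $\phi$ is a left $\Chinf$-module homomorphism and then exhibit an explicit two-sided inverse, which immediately forces $\phi$ to be an isomorphism.

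For the homomorphism property, the key is that for $k=k_0r+k_1$ with $0\le k_1\le r-1$, shifting $k$ by $-mr$ yields $k-mr = (k_0-m)r + k_1$, so the residue $k_1$ is preserved and the quotient decreases by $m$. Writing $u=\lambda_0x+\lambda_1k$ and using $\lambda_0\eps+\lambda_1r=-\hbar$, one gets
\begin{align*}
  \lambda_0(x-m\eps)+\lambda_1(k-mr)=u+m\hbar.
\end{align*}
So for $g=\sum_m g_m(u)W^m\in\Chinf$ and $F\in(\Chinf)^r$, I will compute
\begin{align*}
  (g\phi(F))(x,k)=\sum_{m\in\integers}g_m(u)F^{k_1}_{k_0-m}(u+m\hbar),
\end{align*}
which by Proposition~\ref{prop:prodh} is exactly $(gF^{k_1})_{k_0}(u)=\phi(gF)(x,k)$, since the module structure on $(\Chinf)^r$ acts componentwise. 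This verification is a short direct calculation.

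For bijectivity, I will produce the inverse explicitly. Since $\lambda_0\neq 0$, the map $(x,k_0,k_1)\mapsto(\lambda_0x+\lambda_1(k_0r+k_1),k_0r+k_1)$ is a bijection $\reals\times\integers\times\{0,\dots,r-1\}\to\reals\times\integers$. Hence, given $\xi\in\mathcal{S}(\reals\times\integers)$, the natural candidate for the inverse is
\begin{align*}
  \psi(\xi)^{k_1}_{k_0}(u)=\xi\!\left(\tfrac{u-\lambda_1(k_0r+k_1)}{\lambda_0},\;k_0r+k_1\right),
\end{align*}
and by construction $\phi\circ\psi=\mathrm{id}$ and $\psi\circ\phi=\mathrm{id}$ pointwise. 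Injectivity of $\phi$ also follows directly: if $\phi(F)=0$, then $F^{k_1}_{k_0}$ vanishes on all of $\reals$ (since the affine map $x\mapsto \lambda_0 x+\lambda_1(k_0r+k_1)$ is surjective), so $F=0$.

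The only nonformal step is checking that $\psi(\xi)$ really lies in $(\Chinf)^r$, i.e.\ that each $\psi(\xi)^{k_1}=\sum_{k_0}\psi(\xi)^{k_1}_{k_0}(u)W^{k_0}$ is Schwartz on $\reals\times S^1$. This is where I expect the main technical obstacle: one needs that for all $m,n,p\geq 0$, the quantities $u^m\partial_u^n\bigl(k_0^p\,\psi(\xi)^{k_1}_{k_0}(u)\bigr)$ are uniformly bounded in $(u,k_0)$. The $\partial_u^n$ brings out a factor $\lambda_0^{-n}$ times $(\partial_x^n\xi)$ evaluated at the affine point; the bound $|u|^m\le C(|x|^m+|k|^m)$ reduces the estimate to the decay of $|x|^{m_1}|k|^{m_2}\partial_x^n\xi(x,k)$, which is precisely the Schwartz property of $\xi$ on $\reals\times\integers$, together with $|k_0|\sim|k|$ for fixed residue $k_1$. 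Once this estimate is written out, both $\phi$ and its inverse are seen to preserve Schwartz regularity, completing the proof.
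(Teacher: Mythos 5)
Your argument is correct and follows essentially the same route as the paper: you verify the homomorphism property via the identity $\lambda_0\eps+\lambda_1r=-\hbar$ (so that $\lambda_0(x-m\eps)+\lambda_1(k-mr)=u+m\hbar$), reduce to the componentwise twisted-convolution formula of Proposition~\ref{prop:prodh}, and then exhibit the same explicit inverse. The one place you go further than the paper is in flagging and sketching the check that $\phi^{-1}$ preserves the Schwartz class, which the paper leaves implicit; this is a worthwhile addition but not a different approach.
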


\begin{proof}
  First of all, it is clear that
  $\phi(F+F')=\phi(F)+\phi(F')$. Furthermore, one finds that
  \begin{align*}
    \phi(fF)(x,k)
    &= (fF)^{k_1}_{k_0}(\lambda_0x+\lambda_1k)\\
    &= \sum_{n\in\integers}f_n(\lambda_0x+\lambda_1k)F^{k_1}_{k_0-n}(\lambda_0x+\lambda_1k+n\hbar)
  \end{align*}
  as well as
  \begin{align*}
    \paraa{f\phi(F)}(x,k)
    &= \sum_{n\in\integers}f(\lambda_0x+\lambda_1k)\phi(F)(x-n\eps,k-nr)\\
    &= \sum_{n\in\integers}f(\lambda_0x+\lambda_1k)\phi(F)(x-n\eps,(k_0-n)r+k_1)\\    
    &= \sum_{n\in\integers}f(\lambda_0x\!+\!\lambda_1k)
      F^{k_1}_{k_0-n}\paraa{\lambda_0(x\!-\!n\eps)\!+\!\lambda_1(k_0r\!+\!k_1)\!-\!n\lambda_1r}\\
    &= \sum_{n\in\integers}f(\lambda_0x+\lambda_1k)
      F^{k_1}_{k_0-n}\paraa{\lambda_0x+\lambda_1k+n\hbar}
      =\phi(fF)(x,k)
  \end{align*}
  by using that $-n(\lambda_0\eps+\lambda_1r)=n\hbar$. Hence, $\phi$
  is a left module homomorphism. One may readily construct the inverse
  \begin{align*}
    \phi^{-1}(\xi)=(F^0,\ldots,F^{r-1}):\ \ 
    F^k = \sum_{n\in\integers}\xi\paraa{\tfrac{1}{\lambda_0}(u-\lambda_1(nr+k)),nr+k}W^n
  \end{align*}
  and check that
  \begin{align*}
    \phi\paraa{\phi^{-1}(\xi)}(x,k)
    &= \paraa{\phi^{-1}(\xi)}^{k_1}_{k_0}(\lambda_0x+\lambda_1k)\\
    &= \xi\paraa{x-\lambda_1(k_0r+k_1)+\lambda_1k,k_0r+k_1}=\xi(x,k).
  \end{align*}
  We conclude that $\phi$ is indeed a left module isomorphism.
\end{proof}

\noindent
In the case when $r=1$ and, moreover, $\E_{\hbar}$ is a $\Chinf$-bimodule, one can
strengthen the result to obtain a bimodule isomorphism.

\begin{proposition}
  For $\hbar>0$, let $\E_{\hbar,\hbar}$ be a
  $\Chinf$-bimodule (as in Proposition~\ref{prop:bimodule})
  with $\lambda_0\neq 0$ and $r=r'=1$. The map
  $\phi:\Chinf\to\E_{\hbar,\hbar}$, defined as
  \begin{align}
    \phi(f)(x,k) = f_k\paraa{\lambda_0x+\lambda_1k}\label{eq:iso.E.r.one}
  \end{align}
  for $f=\sum_{n\in\integers}f_n(u)W^n$, is a bimodule isomorphism
  with inverse
  \begin{align*}
    \phi^{-1}(\xi) = \sum_{n\in\integers}
    \xi\paraa{\tfrac{1}{\lambda_0}(u-\lambda_1n),n}W^n.    
  \end{align*}
  for $\xi\in\E_{\hbar,\hbar}$.
\end{proposition}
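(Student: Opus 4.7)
The plan is to leverage Proposition~\ref{prop:module.iso.r.algebra} to handle the left-module part essentially for free, and then verify by a direct computation that $\phi$ also intertwines the right action, with the bimodule parameter relations of Proposition~\ref{prop:bimodule} doing all the work.

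First I would observe that when $r=1$ the decomposition $k=k_0 r+k_1$ collapses to $k_0=k$, $k_1=0$, so the formula of Proposition~\ref{prop:module.iso.r.algebra} specializes precisely to \eqref{eq:iso.E.r.one}, and the inverse displayed there specializes precisely to the inverse formula in the present statement. Consequently $\phi$ is already known to be a bijective left $\Chinf$-module homomorphism with the claimed inverse, and the only remaining point is to establish the right-module compatibility $\phi(fg)=\phi(f)\,g$ for all $f,g\in\Chinf$.

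To check this I would expand both sides. On the one hand, Proposition~\ref{prop:prodh} gives, after reindexing $j=k-m$,
\begin{align*}
  \phi(fg)(x,k) = \sum_{m\in\integers} f_{k-m}(\lambda_0 x+\lambda_1 k)\, g_m\paraa{\lambda_0 x+\lambda_1 k+(k-m)\hbar}.
\end{align*}
On the other hand, applying Proposition~\ref{prop:right.module} with $r'=1$ and simplifying the argument shift of $f$ via the bimodule condition $\lambda_0\eps'+\lambda_1=0$ yields
\begin{align*}
  \paraa{\phi(f)\, g}(x,k) = \sum_{m\in\integers} g_m(\mu_0 x+\mu_1 k - m\hbar)\, f_{k-m}(\lambda_0 x+\lambda_1 k).
\end{align*}
The required equality thus reduces to the affine identity
\begin{align*}
  \lambda_0 x+\lambda_1 k+(k-m)\hbar = \mu_0 x+\mu_1 k - m\hbar
\end{align*}
holding for all $x\in\reals$ and $k,m\in\integers$, equivalently to the two scalar relations $\mu_0=\lambda_0$ and $\mu_1=\lambda_1+\hbar$.

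The final step is to extract these two relations from the four parameter conditions of Proposition~\ref{prop:bimodule}. Subtracting $\lambda_0\eps'+\lambda_1=0$ from $\lambda_0\eps+\lambda_1=-\hbar$, and subtracting $\mu_0\eps+\mu_1=0$ from $\mu_0\eps'+\mu_1=\hbar$, one obtains $\lambda_0(\eps-\eps')=-\hbar=\mu_0(\eps-\eps')$; since $\lambda_0\neq 0$ and $\hbar\neq 0$ the factor $\eps-\eps'$ is nonzero, so $\mu_0=\lambda_0$. Feeding this back into $\mu_0\eps+\mu_1=0$ and combining with $\lambda_0\eps=-\lambda_1-\hbar$ yields $\mu_1=\lambda_1+\hbar$. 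The only real obstacle is keeping the four parameter relations straight; the conceptual content of the proposition is that once $r=r'=1$ the discrete index in $\mathcal{S}(\reals\times\integers)$ is in bijection with the Fourier index of $W^n$, so the left isomorphism of Proposition~\ref{prop:module.iso.r.algebra} already applies without further work, and right-module compatibility is forced by the parameter constraints of the bimodule.
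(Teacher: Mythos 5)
Your proposal is correct and follows essentially the same route as the paper: invoke Proposition~\ref{prop:module.iso.r.algebra} for the left-module isomorphism, then verify the right-module compatibility by a direct term-by-term comparison, reducing it to the two scalar relations $\mu_0=\lambda_0$ and $\mu_1=\lambda_1+\hbar$ extracted from the four bimodule constraints. The only cosmetic difference is that the paper solves for $\lambda_0,\mu_0,\lambda_1,\mu_1$ explicitly in terms of $\eps,\eps',\hbar$, whereas you subtract pairs of constraints to get the needed identities directly.
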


\begin{proof}
  The fact that $\phi$ is a left module isomorphism with 
  \begin{align*}
    \phi^{-1}(\xi) = \sum_{n\in\integers}
    \xi\paraa{\tfrac{1}{\lambda_0}(u-\lambda_1n),n}W^n
  \end{align*}
  follows immediately from Proposition~\ref{prop:module.iso.r.algebra}
  (and its proof). Before showing that $\phi$ is a also a right
  $\Chinf$-module homomorphism, let us derive a few properties of the
  parameters defining the module. For a bimodule with $\hbar=\hbar'>0$
  and $r=r'=1$ one necessarily has $\eps\neq\eps'$ (otherwise implying
  $\hbar=0$). Hence, one may solve for $\lambda_0,\mu_0$ and
  $\lambda_1,\mu_1$ to obtain
  \begin{align*}
    \lambda_0=\mu_0 = \frac{\hbar}{\eps'-\eps}\quad
    \lambda_1 = -\frac{\hbar\eps'}{\eps'-\eps}\qquad
    \mu_1 = -\frac{\hbar\eps}{\eps'-\eps}
  \end{align*}
  and we note that
  \begin{align}\label{eq:mu.m.hbar}
    \mu_1-\hbar = \frac{-\hbar\eps-(\eps'-\eps)\hbar}{\eps'-\eps}
    =-\frac{\hbar\eps'}{\eps'-\eps}=\lambda_1.
  \end{align}
  Now, one computes for $f,g\in\Chinf$
  \begin{align*}
    \phi(fg)(x,k)
    &=(fg)_k(\lambda_0x+\lambda_1k)\\
    &=\sum_{n\in\integers}f_n(\lambda_0x+\lambda_1k)g_{k-n}(\lambda_0x+\lambda_1k+n\hbar)                
  \end{align*}
  and
  \begin{align*}
    \paraa{\phi(f)g}(x,k)
    &= \sum_{l\in\integers}g_l(\mu_0x+\mu_1k-l\hbar)\phi(f)\paraa{x-l\eps',k-l}\\
    &=\sum_{l\in\integers}f_{k-l}\paraa{\lambda_0(x-l\eps')+\lambda_1(k-l)}g_l(\mu_0x+\mu_1k-l\hbar)\\
    &=\sum_{n\in\integers}f_{k-l}\paraa{\lambda_0x+\lambda_1k-l(\lambda_0\eps'+\lambda_1)}g_l(\mu_0x+\mu_1k-l\hbar)\\
    &=\sum_{n\in\integers}f_{k-l}\paraa{\lambda_0x+\lambda_1k}g_l(\mu_0x+\mu_1k-l\hbar)
  \end{align*}
  since
  $0=\lambda_0\eps'+\lambda_1r'=\lambda_0\eps'+\lambda_1$. Moreover,
  changing the summation index to $n=k-l$ gives
  \begin{align*}
    \paraa{\phi(f)g}(x,k)
    &=\sum_{n\in\integers}f_n\paraa{\lambda_0x+\lambda_1k}g_{k-n}(\mu_0x+(\mu_1-\hbar)k+n\hbar)\\
    &=\sum_{n\in\integers}f_n\paraa{\lambda_0x+\lambda_1k}g_{k-n}(\lambda_0x+\lambda_1k+n\hbar)
      =\phi(fg)(x,k),
  \end{align*}
  by using that $\lambda_0=\mu_0$ and $\mu_1-\hbar=\lambda_1$, as shown
  in \eqref{eq:mu.m.hbar}.  
\end{proof}




\subsection{Hermitian structures}
\noindent
Continuing the analogy with the noncommutative torus, we show 
there exist hermitian structures on the
$(\Ch,\C_{\hbar'})$-bimodule $\Ehhp$.

\begin{proposition}\label{prop:hermitian.structure.bimodule}
  Let $\Ehhp$ be a $(\Ch,\C_{\hbar'})$-bimodule, as in
  Proposition~\ref{prop:bimodule}, and define
  $\lhp{\cdot,\cdot}:\Ehhp\times\Ehhp\to\Ch$ and
  $\rhp{\cdot,\cdot}:\Ehhp\times\Ehhp\to\C_{\hbar'}$ as
  \begin{align*}
    &\lhp{\xi,\eta} = \sum_{n\in\integers}\bracketc{\int_\reals
    \paraa{\xi,e^{2\pi i\xh u}W^n\eta}_Le^{2\pi i\xh u}d\xh}W^n\\
    &\rhp{\xi,\eta} = \sum_{n\in\integers}\bracketc{\int_\reals
    \paraa{\xi e^{2\pi i\xh u}W^n,\eta}_Re^{2\pi i\xh u}d\xh}W^n.
  \end{align*}
  Then it follows that
  \begin{align*}
    &\lhp{a\xi,\eta} = a\paraa{\lhp{\xi,\eta}}\qquad
      \rhp{\xi,\eta b} = \paraa{\rhp{\xi,\eta}}b
  \end{align*}
  as well as the compatibility condition
  \begin{align*}
    \lhp{\xi,\eta}\psi = \xi\rhp{\eta,\psi}
  \end{align*} 
  for $a\in\Ch$, $b\in\C_{\hbar'}$ and $\xi,\eta,\psi\in\Ehhp$.
\end{proposition}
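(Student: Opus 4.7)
The three identities will be verified by direct computation, using the adjoint relations $(f\xi,\eta)_L=(\xi,f^*\eta)_L$ and $(\xi f,\eta)_R=(\xi,\eta f^*)_R$ from Propositions~\ref{prop:left.module} and \ref{prop:right.module}, together with the fact that the $\xh$-integrals in the definitions of $\lhp{\cdot,\cdot}$ and $\rhp{\cdot,\cdot}$ are (formal) Fourier inversions.

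For the left-linearity $\lhp{a\xi,\eta}=a\lhp{\xi,\eta}$, I would first reduce, by linearity in $a$ and Fourier decomposition in the $u$-variable, to the case of plane-wave generators $a=e^{2\pi i\beta u}W^m$. For such an $a$, Proposition~\ref{prop:star.structure} gives $a^*=e^{2\pi i m\hbar\beta}e^{-2\pi i\beta u}W^{-m}$, and the commutation rule \eqref{eq:tu.simple.com-2} then yields
\begin{align*}
a^*\cdot\paraa{e^{2\pi i\xh u}W^n} = e^{2\pi i m\hbar(\beta-\xh)}e^{2\pi i(\xh-\beta)u}W^{n-m}.
\end{align*}
Substituting this into $\lhp{a\xi,\eta}_n(u)=\int(\xi,a^*e^{2\pi i\xh u}W^n\eta)_L\,e^{2\pi i\xh u}d\xh$ and translating the Fourier variable by $\xh\mapsto\xh+\beta$ produces $e^{2\pi i\beta u}\lhp{\xi,\eta}_{n-m}(u+m\hbar)$, which by Proposition~\ref{prop:prodh} is exactly the $n$-th coefficient of $a\lhp{\xi,\eta}$. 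The right-linearity $\rhp{\xi,\eta b}=\rhp{\xi,\eta}b$ follows by the completely parallel argument, now applied to plane-wave generators $b=e^{2\pi i\gamma u}W^p$ in $\Chp$ with the commutation rule for the right module action.

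The compatibility condition $\lhp{\xi,\eta}\psi=\xi\rhp{\eta,\psi}$ is where the main technical effort lies. My plan is to evaluate both sides pointwise at $(x,k)$, substitute the left and right module actions from Propositions~\ref{prop:left.module} and \ref{prop:right.module}, and collapse the inner $\xh$-Fourier integrals against the real integration in $(\cdot,\cdot)_L$ or $(\cdot,\cdot)_R$, formally using $\int d\xh\,e^{2\pi i\xh\alpha}=\delta(\alpha)$. Both sides then reduce to double series over $\integers^2$ whose summands are products of one factor each of $\xi$, $\overline{\eta}$, and $\psi$, evaluated at arguments shifted by combinations of $\eps,\eps',r,r'$ and the parameters $\lambda_0,\lambda_1,\mu_0,\mu_1$. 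The bimodule relations $\lambda_0\eps'+\lambda_1r'=0$ and $\mu_0\eps+\mu_1r=0$ from Proposition~\ref{prop:bimodule} are exactly what is needed to identify the continuous and discrete shifts of the two series via a bijection of $\integers^2$. I expect this bookkeeping of shift parameters and change of summation indices, rather than any conceptual subtlety, to be the main obstacle.
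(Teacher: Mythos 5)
Your proposal follows essentially the same route as the paper: for the left-linearity, the paper works with general $a=\sum_k a_k(u)W^k$, applies the adjoint relation and commutation rule, and then substitutes the Fourier integral representation of $a_k$ before shifting the Fourier variable—which is the same manipulation as your reduction to plane-wave generators $e^{2\pi i\beta u}W^m$ followed by the translation $\xh\mapsto\xh+\beta$. For the compatibility identity the paper likewise collapses the $\xh$-integrals to delta functions and then matches the two double sums term by term (fixing $n$ and $n'$ and choosing $l,l'$), with the bimodule constraints $\lambda_0\eps'+\lambda_1 r'=0$ and $\mu_0\eps+\mu_1 r=0$ emerging as exactly the remaining conditions, as you anticipated.
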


\begin{remark}
  Strictly speaking, $e^{2\pi i\xh u}W^n\eta$ is not defined since
  $e^{2\pi i\xh u}$ does not decay as $u\to\infty$ (and, hence, does
  not belong to the algebra). However, having in mind the left action
  \eqref{eq:Ch.left.module.action}, we interpret the above expression
  as
  \begin{align*}
    (\xi,e^{2\pi i\xh u}W^n\eta)_L = 
    \sum_{k\in\integers}\int_{\reals}
    \xi(x,k)\overline{\eta(x-n\eps,k-nr)}e^{-2\pi i\xh(\lambda_0x+\lambda_1k)}dx
  \end{align*}
  and allow ourselves a formulation as in
  Proposition~\ref{prop:hermitian.structure.bimodule} since it more
  clearly reflects the idea behind the construction. Similarly for the
  right structure.
\end{remark}

\begin{proof}
  Let us start by showing that
  $\lhp{a\xi,\eta}=a (\lhp{\xi,\eta})$. Thus, we set
  \begin{align*}
    a = \sum_{k}a_k(u)W^k
  \end{align*}
  and write
  \begin{align*}
    \lhp{a\xi,\eta}
    &=
      \sum_{k,n}\int_\reals
      \paraa{a_k(u)W^k\xi,e^{2\pi i\xh u}W^n\eta}_Le^{2\pi i\xh u}W^nd\xh\\
    &=\sum_{k,n}\int_\reals\paraa{\xi,W^{-k}\overline{a_k(u)}e^{2\pi i\xh u}W^n\eta}_Le^{2\pi i\xh u}W^nd\xh\\
    &=\sum_{k,n}\int_\reals\paraa{\xi,\overline{a_k(u-k\hbar)}e^{2\pi i\xh(u-k\hbar)}W^{n-k}\eta}_Le^{2\pi i\xh u}W^nd\xh.\\
    &=\sum_{k,n}\int_\reals\paraa{\xi,\overline{a_k(u-k\hbar)}e^{2\pi i\xh}W^{n-k}\eta}_Le^{2\pi i\xh k\hbar}e^{2\pi i\xh u}W^nd\xh.
  \end{align*}
  Now, let us replace $a_k(u-k\hbar)$ by its Fourier integral:
  \begin{align*}
    \lhp{a\xi,\eta}
    &=\!\sum_{k,n}\iint_{\reals^2}\!\paraa{\xi,\overline{\hat{a}_k(x)}e^{-2\pi ix(u-k\hbar)}e^{2\pi i\xh u}W^{n-k}\eta}_Le^{2\pi i\xh k\hbar}e^{2\pi i\xh u}W^ndx d\xh\\
    &=\!\sum_{k,n}\iint_{\reals^2}\paraa{\xi,e^{2\pi iu(\xh-x)}W^{n-k}\eta}_L\hat{a}_k(x)
      e^{2\pi i (\xh-x)k\hbar}e^{2\pi i\xh u}W^ndx d\xh.
  \end{align*}
  By a change of variables we set $l=n-k$ and $y=\xh-x$, giving
  \begin{align*}
    \lhp{a\xi,\eta}
    &=\sum_{k,l}\iint_{\reals^2}\paraa{\xi,e^{2\pi iuy}W^l\eta}_L\hat{a}_k(x)
      e^{2\pi iyk\hbar}e^{2\pi i(x+y) u}W^{k+l}dx dy\\
    &= \sum_{k,l}\int_\reals\hat{a}_k(x)e^{2\pi ixu} dx\int_{\reals^2}\paraa{\xi,e^{2\pi iuy}W^l\eta}_L
      e^{2\pi i yk\hbar}e^{2\pi i yu}W^{k+l}dy\\
    &=\sum_{k,l} a_k(u)W^k\int_\reals\paraa{\xi,e^{2\pi iuy}W^l\eta}_Le^{2\pi iyu}W^l dy
      =a\paraa{\lhp{\xi,\eta}}.
  \end{align*}
  The proof of the statement $\rhp{\xi,\eta a}=(\rhp{\xi,\eta})a$ is
  completely analogous. Finally, we need to show compatibility of the
  two products; namely, that
  \begin{align*}
    \lhp{\xi,\eta}\psi = \xi\rhp{\eta,\psi}
  \end{align*}
  for $\xi,\eta,\psi\in\Ehhp$. One writes
  \begin{align*}
    (\lhp{\xi,\eta}\psi)(x,k)
	  &=\sum_{n}\int_\reals\paraa{\xi,e^{2\pi i\xh u}W^n\eta}_L
      e^{2\pi i\xh(\lambda_0x+\lambda_1k)}\psi(x-n\eps,k-nr)d\xh\\
    &=\sum_{n,l}\iint_{\reals^2}\xi(y,l)e^{-2\pi i\xh(\lambda_0y+\lambda_1l)}\overline{\eta(y-n\eps,l-nr)}\\
	  &\qquad\quad\times e^{2\pi i\xh(\lambda_0x+\lambda_1k)}\psi(x-n\eps,k-nr)dy d\xh\\
    &= \sum_{n,l}\int_\reals\int_\reals e^{2\pi i\xh(\lambda_0(x-y)+\lambda_1(k-l))}\\
	  &\qquad\quad\times\xi(y,l)\overline{\eta(y-n\eps,l-nr)}\psi(x-n\eps,k-nr)dy d\xh\\
    &=\sum_{n,l}\int_{\reals}\delta(\lambda_0(x-y)+\lambda_1(k-l))
      \xi(y,l)\\
	  &\qquad\quad\times\overline{\eta(y-n\eps,l-nr)}\psi(x-n\eps,k-nr)dy\\
    &=\sum_{n,l}\xi\paraa{x+(k-l)\lambda_1/\lambda_0,l}\\
      &\qquad\quad\times\overline{\eta\paraa{x+(k-l)\lambda_1/\lambda_0-n\eps,l-nr}}\psi(x-n\eps,k-nr).
  \end{align*}
  A similar computation for $\xi\rhp{\eta,\psi}$ gives
  \begin{align*}
	  \paraa{\xi\rhp{\eta,\psi}}(x,k) =& \sum_{n',l'}\xi(x-n'\eps',k-n'r')\\[-1ex]
	  &\qquad\times\overline{\eta\paraa{x+(k-l')\mu_1/\mu_0-n'\eps',l'-n'r'}}\\
    &\qquad\times  \psi\paraa{x+(k-l')\mu_1/\mu_0,l'}
  \end{align*}
  To prove that the expressions for $(\lhp{\xi,\eta}\psi)(x,k)$ and
  $\paraa{\xi\rhp{\eta,\psi}}(x,k)$ are equal for all $x$ and $k$, we
  compare the sums term by term. Let us proceed as follows: We fix
  arbitrary $n$ in the expression for $(\lhp{\xi,\eta}\psi)(x,k)$ and
  arbitrary $n'$ in the expression for
  $\paraa{\xi\rhp{\eta,\psi}}(x,k)$. Now, for every such choice we
  will prove that there exists $l$ and $l'$ such that the
  corresponding terms are equal. Setting
  \begin{align*}
    l = k-n'r'\quad\text{and}\quad l'=k-nr,
  \end{align*}
  one finds (by comparing the arguments of $\xi$, $\eta$ and $\psi$)
  that the corresponding terms in the two sums are equal if
  \begin{align*}
    &\frac{\lambda_1}{\lambda_0}(k-l) = -n'\eps'\\
    &\frac{\lambda_1}{\lambda_0}(k-l)-n\eps =
      \frac{\mu_1}{\mu_0}(k-l')-n'\eps'\\
    &l-nr = l'-n'r'\\
    &-n\eps = \frac{\mu_1}{\mu_0}(k-l')
  \end{align*}
  Inserting $l=k-n'r'$ and $l'=k-nr$ into these equations yields
  \begin{align*}
    \lambda_0\eps'+\lambda_1r'=0\quad\text{and}\quad
    \mu_0\eps+\mu_1r = 0
  \end{align*}
  as the remaining conditions. However, these conditions are true due
  to the fact that $\Ehhp$ is assumed to be a bimodule fulfilling the
  requirements of Proposition~\ref{prop:bimodule}.  
\end{proof}

\subsection{Bimodule connections}

Let us now turn to the question of finding connections $\nabla$ on the
bimodule $\Ehhp$, of the form
\begin{align}\label{eq:bimodule.conn.def}
  \nabla_1\xi(x,k) = \alpha\xi'_x(x,k)\qquad
  \nabla_2\xi(x,k) = \beta x\xi(x,k) + \gamma k\xi(x,k),
\end{align}
for $\alpha,\beta,\gamma\in\complex$, and we start by working out the
conditions for $\nabla$ to be a left connection; that is, such that
\begin{align*}
\nabla_k(f\xi) = f\nabla_k\xi + (\partial_kf)\xi
\end{align*}
for $k=1,2$, $\xi\in\Ehhp$ and $f\in\Chinf$. 

\begin{proposition}\label{prop:left.module.connection}
  Let $\E_\hbar$ be a left $\Chinf$-module with
  respect to a choice of $\lambda_0,\lambda_1,\eps,r$ (as in
  Proposition~\ref{prop:left.module}) with $\lambda_0\neq 0$. If
  $\alpha=1/\lambda_0$ and $\beta\eps+\gamma r=2\pi i$ then $\nabla$,
  as defined in \eqref{eq:bimodule.conn.def}, is a left module
  connection.
\end{proposition}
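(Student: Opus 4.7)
The plan is to verify the Leibniz rule $\nabla_k(f\xi) = f\nabla_k\xi + (\partial_k f)\xi$ directly for $k=1,2$ by expanding both sides using the explicit module action from Proposition~\ref{prop:left.module} and the definitions of $\partial_1,\partial_2$, and then reading off what the constants $\alpha,\beta,\gamma$ must satisfy.

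For $\nabla_1$, I would start from
\begin{align*}
  \nabla_1(f\xi)(x,k) = \alpha\,\frac{\partial}{\partial x}\sum_{n\in\integers} f_n(\lambda_0 x+\lambda_1 k)\,\xi(x-n\eps,k-nr),
\end{align*}
apply the ordinary product rule, and separate the two resulting terms. The term in which the derivative lands on $\xi$ matches $f\nabla_1\xi$ term by term regardless of $\alpha$, while the term in which the derivative lands on $f_n$ produces a factor $\alpha\lambda_0\, f_n'(\lambda_0 x+\lambda_1 k)$. Comparing with $(\partial_1 f)\xi(x,k)=\sum_n f_n'(\lambda_0 x+\lambda_1 k)\xi(x-n\eps,k-nr)$ forces $\alpha\lambda_0=1$, which requires $\lambda_0\neq 0$ and fixes $\alpha=1/\lambda_0$.

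For $\nabla_2$, I would expand
\begin{align*}
  \nabla_2(f\xi)(x,k) - f\nabla_2\xi(x,k) = \sum_{n\in\integers}\bracketb{(\beta x+\gamma k)-\beta(x-n\eps)-\gamma(k-nr)}f_n(\lambda_0 x+\lambda_1 k)\xi(x-n\eps,k-nr),
\end{align*}
which simplifies to $\sum_n n(\beta\eps+\gamma r)f_n(\lambda_0 x+\lambda_1 k)\xi(x-n\eps,k-nr)$. Comparing with $(\partial_2 f)\xi(x,k)=2\pi i\sum_n n f_n(\lambda_0 x+\lambda_1 k)\xi(x-n\eps,k-nr)$ gives exactly the condition $\beta\eps+\gamma r=2\pi i$.

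There is no real obstacle here; the computation is mechanical once the module action is written out. The only subtlety worth flagging is that the condition $\lambda_0\neq 0$ is essential for $\nabla_1$ to give a bona fide Leibniz rule, since it is $\lambda_0$ that appears when differentiating the composed argument $\lambda_0 x+\lambda_1 k$ of $f_n$; the value of $\lambda_1$ plays no role because $x$ (not $k$) is the variable being differentiated.
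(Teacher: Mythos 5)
Your proof is correct and follows the same approach as the paper: expand the Leibniz rule directly using the module action from Proposition~\ref{prop:left.module}, apply the product rule for $\nabla_1$ to extract the factor $\alpha\lambda_0$, and collect the coefficient $n(\beta\eps+\gamma r)$ for $\nabla_2$. The paper simply states the outcome of these expansions more tersely; your version spells out the same computation.
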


\begin{proof}
  One finds that
  \begin{align*}
    \nabla_1(f\xi)(x,k)
    &= \alpha\paraa{f\xi'_x}(x,k)+\alpha\lambda_0\paraa{(\partial_1 f)\xi}(x,k)\\
    &= (f\nabla_1\xi)(x,k)+\paraa{(\partial_1f)\xi}(x,k)
  \end{align*}
  since $\alpha\lambda_0=1$. Furthermore,
  \begin{align*}
    \nabla_2(f\xi)(x,k)
    &= \paraa{f(\nabla_2\xi)}(x,k)
      +\frac{\beta\eps+\gamma r}{2\pi i}\paraa{(\partial_2f)\xi}(x,k)\\
    &= \paraa{f(\nabla_2\xi)}(x,k)+\paraa{(\partial_2f)\xi}(x,k)
  \end{align*}
  since $\beta\eps+\gamma r = 2\pi i$, showing that $\nabla$ is a left
  module connection.
\end{proof}

\begin{remark}
By using the isomorphism
$\phi:\Chinf\to\E_{\hbar}$ in \eqref{eq:iso.E.r.one} one may induce
connections on the algebra itself via
\begin{align*}
  \nablat_kf = \phi^{-1}\paraa{\nabla_k\phi(f)}
\end{align*}
for $f\in\Chinf$, giving
\begin{align*}
\nablat_1f = \partial_1f 
\end{align*}
and
\begin{align*}
  \nablat_2f = \parac{\frac{\alpha\beta \hbar}{2 \pi i } + 1} \, \partial_2 f + \alpha\beta \, u f = \partial_2f + \alpha\beta fu \, ,
\end{align*}
using the expression \eqref{der-com} for $\partial_2$.
\end{remark}

It is straightforward to compute the curvature of $\nabla$ in \eqref{eq:bimodule.conn.def}:
\begin{align*}
  F_{12} \xi(x,k) = (\nabla_1\nabla_2 - \nabla_2\nabla_1) \xi(x,k)=\alpha\beta \, \xi(x,k)
\end{align*}
showing that $\nabla$ has a constant curvature equal to $\alpha\beta$. Note
that the curvature does not depend on $\gamma$, which implies that one
may construct connections with arbitrary constant curvature. Namely,
let $R\in\complex$ and set
\begin{align*}
  \alpha=\frac{1}{\lambda_0}\qquad
  \beta = \lambda_0R\qquad
  \gamma = \frac{2\pi i-\lambda_0\eps R}{r}
\end{align*}
clearly fulfilling the requirements of
Proposition~\ref{prop:left.module.connection}, giving a connection of
constant curvature $\alpha\beta=R$.

Moreover, one easily shows that the curvature is $\Chinf$-linear, that is
\begin{align*}
  F_{12} (f \xi(x,k)) = f (F_{12} \xi(x,k))  
\end{align*}
for $\xi\in\E_\hbar$ and $f\in\Chinf$, implying that
the curvature $F_{12}$ is an element of
\begin{align*}
  \textup{End}_{\Chinf}(\E_{\hbar}) =
  \{ T : \E_{\hbar} \to \E_{\hbar})
  ~|~ T(f \xi(x,k)) = f (T \xi(x,k)) \}
\end{align*}
for $\xi\in\E_\hbar$ and $f\in\Chinf$; this is an algebra under composition. 

Given a left connection on $\E_\hbar$, one can define
associated derivations on
$\textup{End}_{\Chinf}(\E_\hbar)$ from the commutators
\begin{align}\label{der-conn}
\delta_k(T) = \nabla_k \circ T - T \circ \nabla_k
\end{align}
for $k=1,2$ and
$T \in \textup{End}_{\Chinf}(\E_\hbar)$.  If
$\E_\hbar$ is also a right $\Chpinf$-module, then
$\Chpinf$ (acting on the right) is a subalgebra of
$\textup{End}_{\Chinf}(\E_\hbar)$. On the algebra $\Chpinf$, with
the connection $\nabla$ as in \eqref{eq:bimodule.conn.def} one
recovers a rescaling of the natural derivations from \eqref{der-conn},
as stated in the following result.

\begin{proposition}\label{prop:right.module.derivation}
  Let $\E_{\hbar,\hbar'}$ be a $(\Chinf,\Chpinf)$-bimodule with and
  let $\nabla$ be a left module connection as defined in
  \eqref{eq:bimodule.conn.def}. Then on $f\in\Chpinf$ the derivations
  in \eqref{der-conn} are given by
  \begin{align*}
    \delta_1(f) = (\alpha \mu_0) \partial_1 f
    \qquad  \delta_2(f) = \frac{\beta \eps'+\gamma r'}{2\pi i}\partial_2 f.    
  \end{align*}
\end{proposition}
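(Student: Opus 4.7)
The plan is to unpack both sides of $\delta_k(f)(\xi) = \nabla_k(\xi f) - (\nabla_k\xi)f$ directly, using the explicit formulas for the right action from Proposition~\ref{prop:right.module} and for $\nabla_k$ in \eqref{eq:bimodule.conn.def}, and then recognise the difference as the right action by a constant multiple of $\partial_k f$.

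For $\delta_1$ I would start from
\begin{align*}
(\xi f)(x,k) = \sum_{n\in\integers} f_n(\mu_0 x + \mu_1 k - n\hbar')\,\xi(x-n\eps', k-nr'),
\end{align*}
apply $\nabla_1 = \alpha\,\partial_x$ and use the chain rule: the derivative hits both $f_n$ and $\xi$. The term where the derivative hits $\xi$ is precisely $((\nabla_1\xi)f)(x,k)$, so it cancels, leaving only the $f'_n$-term with an extra factor of $\mu_0$ from the chain rule. Regrouping the result as a right action shows $\delta_1(f)\xi = \alpha\mu_0\,(\xi\,\partial_1 f)$, i.e.\ $\delta_1(f) = (\alpha\mu_0)\partial_1 f$ as endomorphisms.

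For $\delta_2$ the computation is purely algebraic. Applying $\nabla_2 = \beta x + \gamma k$ on the left and inside the sum defining $\xi f$ produces factors $\beta x + \gamma k$ and $\beta(x-n\eps') + \gamma(k-nr')$ respectively, and subtracting gives a factor $n(\beta\eps' + \gamma r')$ multiplying each summand. Since $\partial_2 f = 2\pi i\sum_n n f_n(u)W^n$, this is exactly the right action of $\frac{\beta\eps' + \gamma r'}{2\pi i}\,\partial_2 f$ on $\xi$.

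There is no real obstacle here: the identities are forced by the chain rule in the first case and by linearity in the Fourier index in the second. The only subtlety to flag is making sure that the $n$-shifts in the arguments of $f_n$ and $\xi$ are treated consistently with the right-action formula from Proposition~\ref{prop:right.module}, so that after the cancellation the remainder is indeed recognised as right multiplication by $\partial_k f$ rather than something resembling it up to extra shifts.
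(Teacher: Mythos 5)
Your proposal is correct and follows essentially the same route as the paper: unpack $\nabla_k(\xi f)-(\nabla_k\xi)f$ using the explicit right-action formula, apply the chain rule (for $k=1$) or the linearity in the index $n$ (for $k=2$), observe the cancellation, and identify the remainder as the right action of a constant multiple of $\partial_k f$. The one point you flagged — tracking the shifts in the arguments so the remainder really is right multiplication by $\partial_k f$ — is exactly the bookkeeping the paper does, and you handled it correctly.
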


\begin{proof}
  With the right structure as in \eqref{eq:Ch.right.module.action}, one computes
    \begin{align*}
   (\xi \delta_1 f)(x,k) & = \nabla_1(\xi f)(x,k) - ((\nabla_1\xi) f) (x,k) \\
    & = (\alpha\mu_0)\sum_{n\in\integers}f'_n(\mu_0x + \mu_1k - n \hbar' )\xi(x-n\eps', x-nr') \\
    & = (\alpha\mu_0) (\xi \partial_1 f)(x,k),
  \end{align*}
  as well as
  \begin{align*}
  (\xi \delta_2 f)(x,k) & = \nabla_2(\xi f)(x,k) - ((\nabla_2\xi) f) (x,k) \\
    & = (\beta\eps'+\gamma r')\sum_{n\in\integers} n f_n(\mu_0x + \mu_1k - n \hbar' )\xi(x-n\eps', x-nr') \\
                        & = \frac{\beta\eps'+\gamma r'}{2\pi i}
                          (\xi \partial_2 f)(x,k),
  \end{align*}
  which establish the results.
\end{proof}

\noindent
As a corollary we have the condition for $\nabla$ to be a right module connection.

\begin{proposition}\label{prop:right.module.connection}
  Let $\E_{\hbar,\hbar'}$ be a $(\Chinf,\Chpinf)$-bimodule with and
  let $\nabla$ be a left module connection as defined in
  \eqref{eq:bimodule.conn.def}.  If $\alpha=1/\mu_0$ and
  $\beta\eps'+\gamma r'=2\pi i$ then $\nabla$ is a right module
  connection; that is,
  \begin{align*}
    \nabla_k( \xi f) = (\nabla_k\xi) f + \xi(\partial_kf)
  \end{align*}
  for $k=1,2$, $\xi\in\Ehhp$ and $f\in\Chpinf$. 
\end{proposition}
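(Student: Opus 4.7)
The plan is to observe that this proposition is an immediate consequence of Proposition~\ref{prop:right.module.derivation}. First, I would unwind the definition of the derivations $\delta_k$ in~\eqref{der-conn}: viewing $f\in\Chpinf$ as an endomorphism of $\Ehhp$ via right multiplication, the commutator $\delta_k(f)=\nabla_k\circ f-f\circ\nabla_k$ acts on $\xi\in\Ehhp$ by
\[
(\xi\,\delta_k(f))(x,k)=\nabla_k(\xi f)(x,k)-\paraa{(\nabla_k\xi) f}(x,k).
\]
Consequently, the desired right Leibniz rule $\nabla_k(\xi f)=(\nabla_k\xi) f+\xi(\partial_k f)$ is equivalent to $\xi\,\delta_k(f)=\xi(\partial_k f)$ for all $\xi\in\Ehhp$, i.e., to the identity $\delta_k=\partial_k$ as derivations of $\Chpinf$.

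Next, I would invoke Proposition~\ref{prop:right.module.derivation} directly, which supplies the explicit formulas
\[
\delta_1 f=(\alpha\mu_0)\,\partial_1 f \qquad\text{and}\qquad
\delta_2 f=\frac{\beta\eps'+\gamma r'}{2\pi i}\,\partial_2 f.
\]
Matching these with $\partial_1 f$ and $\partial_2 f$ reduces the right-connection property to the two numerical conditions $\alpha\mu_0=1$ and $\beta\eps'+\gamma r'=2\pi i$, which are precisely the hypotheses $\alpha=1/\mu_0$ and $\beta\eps'+\gamma r'=2\pi i$ of the proposition.

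There is no genuine obstacle: the computational core has already been executed in the preceding proposition, and the present statement is essentially a reinterpretation of that computation. As an alternative, one could bypass Proposition~\ref{prop:right.module.derivation} and perform a direct verification by substituting~\eqref{eq:bimodule.conn.def} and the right action~\eqref{eq:Ch.right.module.action} into both sides of the Leibniz identity, exploiting the chain rule for $\partial_x$ and a relabelling of the summation index for $\partial_2$. This however would simply reproduce the same calculation under the specialization $\alpha\mu_0=1$, $\beta\eps'+\gamma r'=2\pi i$, so the corollary-style presentation is the cleanest.
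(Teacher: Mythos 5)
Your proposal is correct and follows essentially the same route as the paper: the paper also deduces the result as an immediate corollary of Proposition~\ref{prop:right.module.derivation}, noting that the parameter conditions force $\delta_k(f)=\partial_k f$ and then unwinding the definition of $\delta_k$ in \eqref{der-conn} with $T=f$ acting on the right.
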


\begin{proof}
  With the restriction on the parameters one has
  $\delta_k(f)=\partial_k(f)$ for $k=1,2$ and $f\in\Chpinf$.  With
  $T=f$ (acting on the right), \eqref{der-conn} becomes
  \begin{align*}
    \xi(\partial_kf) = \nabla_k(\xi f) - (\nabla_k\xi)f,
  \end{align*}
  showing that $\nabla$ is indeed a (right) connection.
\end{proof}

\noindent
In order for $\nabla$ to be a bimodule connection on $\Ehhp$ the parameters, apart
from satisfying the requirements of a bimodule, need to
satisfy the requirements of
Proposition~\ref{prop:left.module.connection} and
Proposition~\ref{prop:right.module.connection}. Let us work out what
this implies for the parameters.

\begin{proposition}\label{prop:bimodule.connection.conditions}
  For $\lambda_0\!\neq\! 0$ and $\hbar,\hbar'\!>\!0$, let $\Ehhp$ be a
  $(\Chinf,\Chpinf)$-bimodule with parameters
  $\lambda_0,\lambda_1,\eps,r$ and $\lambda_0,\mu_1,\eps',r'$
  respectively, satisfying the requirements of
  Proposition~\ref{prop:bimodule}, and assume that
  $\beta,\gamma\in\reals$ are such that
  \begin{align*}
    \beta\eps + \gamma r = \beta\eps'+\gamma r' =2\pi i.
  \end{align*}
  \begin{enumerate}
  \item If $\hbar=\hbar'$ then $\beta=0$, $r=r'\neq 0$, $\eps\neq\eps'$ and
    \begin{align*}
      &\lambda_0 = \frac{\hbar}{\eps'-\eps}\quad
      \lambda_1 = -\frac{\hbar\eps'}{r(\eps'-\eps)}\quad
        \mu_1 = -\frac{\hbar\eps}{r(\eps'-\eps)}\quad
      \gamma=\frac{2\pi i}{r}.
    \end{align*}
  \item If $\hbar\neq \hbar'$ then $\hbar/\hbar'=r/r'\in\rationals$ and
    \begin{align*}
      &\mu_1 = \frac{\hbar'-\hbar}{r'-r}+\lambda_1\quad
      \eps = -\frac{1}{\lambda_0}(\hbar+\lambda_1r)\quad
      \eps' = -\frac{\lambda_1r'}{\lambda_0}\\
      &\beta = 2\pi i\lambda_0\frac{\hbar-\hbar'}{\hbar\hbar'}\quad
        \gamma = \frac{2\pi i}{r'}+2\pi i\lambda_1\frac{\hbar-\hbar'}{\hbar\hbar'}
    \end{align*}
  \end{enumerate}
\end{proposition}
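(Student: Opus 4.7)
The plan is to feed the two connection conditions from Propositions~\ref{prop:left.module.connection} and~\ref{prop:right.module.connection} into the four bimodule relations of Proposition~\ref{prop:bimodule}, eliminate parameters, and case-split on whether $\hbar=\hbar'$. The first observation is that the left requirement $\alpha=1/\lambda_0$ and the right requirement $\alpha=1/\mu_0$ force $\lambda_0=\mu_0$, which is already built into the statement. I therefore work with the six linear relations (four bimodule, two connection) in the parameters $\lambda_1,\mu_1,\eps,\eps',r,r',\beta,\gamma$, with $\lambda_0=\mu_0\neq 0$ a fixed scalar.

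The pivotal step is to eliminate $\eps$ and $\eps'$ by subtracting pairs of bimodule relations: $\lambda_0\eps+\lambda_1 r=-\hbar$ minus $\lambda_0\eps+\mu_1 r=0$ gives $r(\lambda_1-\mu_1)=-\hbar$, and $\lambda_0\eps'+\mu_1 r'=\hbar'$ minus $\lambda_0\eps'+\lambda_1 r'=0$ gives $r'(\mu_1-\lambda_1)=\hbar'$. Before dividing by $r,r'$ I must check they are non-zero: if $r=0$ the same two relations force $\lambda_0\eps=-\hbar$ and $\lambda_0\eps=0$, contradicting $\hbar>0$, and symmetrically $r'\neq 0$. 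Thus
\begin{align*}
\mu_1-\lambda_1=\frac{\hbar}{r}=\frac{\hbar'}{r'},
\end{align*}
which is the identity driving the case split: either $\hbar=\hbar'$ and $r=r'$, or $\hbar\neq\hbar'$ and $r\neq r'$, whence $\hbar/\hbar'=r/r'\in\rationals$, establishing the first half of part (ii).

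In case (i) the two connection equations collapse, because $r=r'$, to $\beta(\eps-\eps')=0$. A further subtraction of the first and third bimodule relations yields $\lambda_0(\eps-\eps')=-\hbar\neq 0$, so $\eps\neq\eps'$ and hence $\beta=0$ and $\gamma=2\pi i/r$. The stated formulas for $\lambda_0$, $\lambda_1$ and $\mu_1$ then drop out after solving $\lambda_0=\hbar/(\eps'-\eps)$ and re-substituting into the remaining bimodule relations. In case (ii) I solve the $2\times 2$ system in $(\beta,\gamma)$ by Cramer's rule using the expressions $\eps=-(\hbar+\lambda_1 r)/\lambda_0$ from the first bimodule relation and $\eps'=-\lambda_1 r'/\lambda_0$ from the third; the determinant simplifies to $\eps r'-\eps' r=-\hbar r'/\lambda_0\neq 0$, and substituting $r'-r=r'(\hbar'-\hbar)/\hbar'$ (a rewriting of $\hbar r'=\hbar' r$) cleans the resulting expressions into the stated form for $\beta$ and $\gamma$. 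Finally, $\mu_1=(\hbar'-\hbar)/(r'-r)+\lambda_1$ follows from the proportion $\mu_1-\lambda_1=\hbar/r=\hbar'/r'=(\hbar'-\hbar)/(r'-r)$, valid since $r\neq r'$.

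The underlying computations are entirely linear-algebraic and routine; the only real obstacle is the bookkeeping, since with nine parameters and six linear constraints one must keep careful track of which variables are free and which are determined, and justify the non-vanishing of each denominator ($r$, $r'$, $\eps-\eps'$, and the Cramer determinant $-\hbar r'/\lambda_0$) before dividing. This is precisely where the hypotheses $\lambda_0\neq 0$ and $\hbar,\hbar'>0$ are used in a decisive way.
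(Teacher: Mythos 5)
Your argument follows the same route as the paper's: combining the first and fourth bimodule relations and the second and third to eliminate $\eps,\eps'$ and obtain $(\lambda_1-\mu_1)r=-\hbar$ and $(\lambda_1-\mu_1)r'=-\hbar'$, case-splitting on $\hbar=\hbar'$, and then solving the remaining linear system (the paper substitutes rather than invoking Cramer explicitly, but the calculation is the same). Your proposal is correct and organizationally slightly cleaner in establishing $r,r'\neq 0$ up front, but it is not a genuinely different proof.
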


\begin{proof}
  The requirements on the parameters can be summarized as:
  \begin{align}
    &\lambda_0\eps + \lambda_1r = -\hbar\label{eq:lelrmh}\\
    &\lambda_0\eps'+\lambda_1r' = 0\label{eq:leplrp}\\
    &\lambda_0\eps'+\mu_1r'=\hbar'\label{eq:lepmrphp}\\
    &\lambda_0\eps+\mu_1r = 0\label{eq:lemr}\\
    &\beta\eps+\gamma r = \beta\eps'+\gamma r'=2\pi i,\label{eq:begr}
  \end{align}
  and we note that \eqref{eq:lelrmh} and \eqref{eq:lemr} imply
  $(\lambda_1-\mu_1)r=-\hbar$, and \eqref{eq:leplrp} and
  \eqref{eq:lepmrphp} imply that $(\lambda_1-\mu_1)r'=-\hbar'$, giving
  \begin{align}
    (\lambda_1-\mu_1)(r-r') = \hbar'-\hbar.\label{eq:l.minus.m.r.minus.r'}
  \end{align}
  Let us first consider the case when $\hbar=\hbar'$, which implies by
  \eqref{eq:l.minus.m.r.minus.r'} that $\lambda_1=\mu_1$ or
  $r=r'$. However, if $\lambda_1=\mu_1$ then \eqref{eq:lelrmh} and
  \eqref{eq:lemr} imply that $\hbar=0$, which contradicts the
  assumption that $\hbar>0$. Hence, one must have that $r=r'$. Next,
  we note that $r=r'\neq 0$ since otherwise \eqref{eq:lelrmh} and
  \eqref{eq:lemr} imply that $\hbar=0$. The same kind argument shows
  that $\eps'\neq\eps$ if $r=r'$ (otherwise \eqref{eq:lelrmh} and
  \eqref{eq:leplrp} imply that $\hbar=0$). Thus, the following
  equations remain to be solved:
  \begin{align*}
    &\lambda_0\eps+\lambda_1r = -\hbar\\
    &\lambda_0\eps'+\lambda_1r = 0\\
    &\lambda_0\eps'+\mu_1r = \hbar\\
    &\lambda_0\eps+\mu_1r = 0\\
    &\beta\eps + \gamma r = \beta\eps'+\gamma r=2\pi i.
  \end{align*}
  The last equation implies that $\beta(\eps'-\eps) = 0$ which gives
  $\beta=0$ since $\eps\neq\eps'$. Solving the above system for the
  variables $\lambda_0,\lambda_1,\mu_1,\gamma$ immediately gives the
  claimed result.

  Next, let us consider the case when $\hbar\neq\hbar'$. Equation
  \eqref{eq:l.minus.m.r.minus.r'} then implies that
  $\lambda_1\neq\mu_1$ and $r\neq r'$, giving
  \begin{align*}
    \mu_1 = \frac{\hbar'-\hbar}{r-r'}+\lambda_1.
  \end{align*}
  Equations \eqref{eq:lelrmh} and \eqref{eq:lemr} imply that
  $\mu_1r-\lambda_1r = \hbar$ which, by inserting the above
  expression, gives
  \begin{align}\label{rational}
    \frac{\hbar'}{\hbar} = \frac{r'}{r}
  \end{align}
  which in particular implies that $\hbar'\hbar\in\rationals$. With
  $\mu_1=(\hbar'-\hbar)/(r-r')+\lambda_1$ and $\hbar'/\hbar=r'/r$,
  \eqref{eq:lelrmh}--\eqref{eq:begr} are equivalent to
  \begin{align*}
    &\lambda_0\eps+\lambda_1r = -\hbar\\
    &\lambda_0\eps'+\lambda_1r' = 0\\
    &\beta\eps + \gamma r = \beta\eps'+\gamma r'=2\pi i.
  \end{align*}
  Solving the first to equations for $\eps,\eps'$ gives
  \begin{align*}
    &\eps = -\frac{1}{\lambda_0}\paraa{\hbar+\lambda_1r}\mathand
    \eps' = -\frac{\lambda_1r'}{\lambda_0}
  \end{align*}
  which, furthermore, gives
  \begin{align*}
    \beta(\eps-\eps') = \gamma(r'-r)\implies
    \gamma = \frac{\beta}{\lambda_0}\parac{\lambda_1-\frac{\hbar}{r'-r}}.
  \end{align*}
  Inserting the expression for $\gamma$ into
  $\beta\eps+\gamma r=2\pi i$ gives
  \begin{align*}
    \beta = 2\pi i\frac{\lambda_0(r-r')}{\hbar r'}
    =2\pi i\lambda_0\frac{r/r'-1}{\hbar}
    =2\pi i\lambda_0\frac{\hbar-\hbar'}{\hbar\hbar'}
  \end{align*}
  (using $\hbar'/\hbar=r'/r$) and finally
  \begin{align*}
    \gamma = \frac{2\pi i}{r'}+2\pi i\lambda_1\frac{\hbar-\hbar'}{\hbar\hbar'},
  \end{align*}
  completing the proof.
\end{proof}

\noindent
Thus, if $\hbar=\hbar'$ then $\beta=0$ which implies that a
$(\Chinf,\Chinf)$-bimodule connection of the type
\eqref{eq:bimodule.conn.def} has zero curvature
\begin{align*}
  (\nabla_1\nabla_2 -\nabla_2\nabla_1) \xi=\alpha\beta\, \xi = 0.
\end{align*}
In the situation when $\hbar\neq\hbar'$,
Proposition~\ref{prop:bimodule.connection.conditions} shows that in
order for such a bimodule connection to exist, the ratio between $\hbar$
and $\hbar'$ has to be rational (note that, they need not themselves
be rational). Moreover, in this case one may always find module
parameters guaranteeing that $\nabla$ is a bimodule connection. The
curvature then becomes
\begin{align}
  (\nabla_1\nabla_2 - \nabla_2\nabla_1) \xi = \alpha\beta \, \xi
  = 2\pi i \, \frac{\hbar-\hbar'}{\hbar\hbar'}\, \xi.
\end{align}
It is noteworthy that the curvature is independent of the particular
parameters defining the bimodule structure, since it only depends on
$\hbar$ and $\hbar'$. Moreover, since $\hbar/\hbar'$ is rational from \eqref{rational}, the
curvature is a rational multiple of $2\pi i$.

\section{A pseudo-Riemannian calculus}

\noindent
In \cite{aw:cgb.sphere,aw:curvature.three.sphere} the concept of
\emph{pseudo-Riemannian calculi} was introduced in order to discuss
Levi-Civita connections on vector bundles over noncommutative
manifolds. In this particular setting, one can prove that there exists
at most one metric and torsion-free connection on a vector bundle
which is equipped with a soldering map; that is a linear map which maps
derivations into sections of the bundle. In this section, we shall
construct a pseudo-Riemannian calculus for the noncommutative cylinder
and explicitly compute the Levi-Civita connection as well as the
corresponding curvature. 

We recall a few definitions. For the time being, we assume that
$\A$ is an arbitrary $\ast$-algebra.

\begin{definition}\label{def:real.metric.calculus}
  Let $M$ be a (right) $\A$-module and let $h$ be a non-degenerate
  $\A$-valued hermitian form on $M$. Furthermore, let
  $\g\subseteq\Der(\A)$ be a real Lie algebra of hermitian derivations
  of $\A$ and let $\varphi:\g\to M$ be a $\reals$-linear map. The data
  $(M,h,\g,\varphi)$ is called a \emph{real metric calculus over $\A$}
  if
  \begin{enumerate}
  \item the image $\Mphi=\varphi(\g)$ generates $M$ as an $\A$-module,
  \item
    $h(E,E')^\ast=h(E,E')$
    for all $E,E'\in\Mphi$.
  \end{enumerate}
\end{definition}

\noindent
An affine connection $\nabla$ on $(M,\g)$ is a map
$\nabla:\g\times M\to M$ such that (with the notation
$\nabla(\partial, U) = \nabla_{\partial}(U)$), one has
\begin{enumerate}
\item $\nabla_{\partial}(U+V) = \nabla_{\partial}U+\nabla_{\partial}V$,
\item $\nabla_{\lambda \partial+\partial'}U = \lambda\nabla_{\partial}U+\nabla_{\partial'}U$,
\item $\nabla_{\partial}(Ua) = \paraa{\nabla_{\partial}U}a+U\partial(a)$,
\end{enumerate}
for all $U,V\in M$, $\partial,\partial'\in\g$, $a\in\A$ and $\lambda\in\reals$.

\begin{definition}
  Let $(M,h,\g,\varphi)$ be a real metric calculus and let $\nabla$ denote
  an affine connection on $(M,\g)$. If
  \begin{align*}
    h(\nabla_{\partial}E,E') = h(\nabla_{\partial}E,E')^\ast
  \end{align*}
  for all $E,E'\in\Mphi$ and $\partial\in \g$ then $(M,h,\g,\varphi,\nabla)$ is
  called a \emph{real connection calculus}.
\end{definition}

\begin{definition}
  Let $(M,h,\g,\varphi,\nabla)$ be a real connection calculus over $M$. The
  calculus is \emph{metric} if
  \begin{align*}
    \partial\paraa{h(U,V)} = h\paraa{\nabla_{\partial}U,V} + h\paraa{U,\nabla_{\partial} V}
  \end{align*}
  for all $\partial\in\g$, $U,V\in M$, and \emph{torsion-free} if 
  \begin{align*}
    \nabla_{\partial}\varphi(\partial')-\nabla_{\partial'}\varphi(\partial)
    -\varphi\paraa{[\partial,\partial']} = 0
  \end{align*}
  for all $\partial,\partial'\in \g$. A metric and torsion-free real connection
  calculus over $\A$ is called a \emph{pseudo-Riemannian calculus over $\A$}. 
\end{definition}

\noindent
Within this framework, the uniqueness of a metric and torsion-free
connection can be stated in the following way.

\begin{theorem}[\cite{aw:curvature.three.sphere}]
  Let $(M,h,\g,\varphi)$ be a real metric calculus over $\A$. Then there exists
  at most one affine connection $\nabla$ on $(M,\g)$, such that
  $(M,h,\g,\varphi,\nabla)$ is a pseudo-Riemannian calculus. 
\end{theorem}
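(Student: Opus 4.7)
The plan is to imitate the classical Koszul-formula uniqueness argument for the Levi-Civita connection, with care taken over the noncommutative hermiticity conditions. Suppose $\nabla$ and $\nabla'$ are two connections making the data a pseudo-Riemannian calculus, and set $D = \nabla - \nabla'$. Since both connections obey the same Leibniz rule, the Leibniz terms cancel in the difference, so $D_\partial(Ua) = (D_\partial U)a$ for all $U \in M$ and $a \in \A$; that is, each $D_\partial$ is a right $\A$-linear endomorphism of $M$. Because $\Mphi = \varphi(\g)$ generates $M$ as a right $\A$-module, it will suffice to show that $D_\partial \varphi(\partial') = 0$ for all $\partial, \partial' \in \g$.

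To achieve this I would derive a Koszul-type identity expressing $h(\nabla_\partial \varphi(\partial'), \varphi(\partial''))$ in terms of data independent of $\nabla$. Starting from the metric condition applied to $h(\varphi(\partial'),\varphi(\partial''))$, I would take the two cyclic permutations in $(\partial,\partial',\partial'')$ with signs $(+,+,-)$ and use the torsion-free identity $\nabla_{\partial'}\varphi(\partial) = \nabla_\partial\varphi(\partial') - \varphi([\partial,\partial'])$ repeatedly to move every connection symbol onto $\varphi(\partial')$ in the first slot of $h$. The routine cancellations should leave $2 h(\nabla_\partial\varphi(\partial'),\varphi(\partial''))$, plus a combination of $\partial$-derivatives of $h(\varphi(\cdot),\varphi(\cdot))$ and of $h(\varphi(\cdot),\varphi([\cdot,\cdot]))$ terms, plus one obstructive cross term of the shape $h(\varphi(\partial'),\nabla_\partial\varphi(\partial'')) - h(\nabla_\partial\varphi(\partial''),\varphi(\partial'))$.

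The main obstacle will be disposing of this cross term. By hermiticity of $h$ it equals $h(\nabla_\partial\varphi(\partial''),\varphi(\partial'))^\ast - h(\nabla_\partial\varphi(\partial''),\varphi(\partial'))$, and this vanishes \emph{precisely} because of the real connection calculus axiom requiring $h(\nabla_\partial E, E')$ to be self-adjoint for $E, E' \in \Mphi$. This is the genuinely noncommutative input: in the classical case it is automatic, while here one must check that this axiom kicks in exactly at this step. Once the Koszul identity is in place, applying it to both $\nabla$ and $\nabla'$ and subtracting gives $h(D_\partial \varphi(\partial'),\varphi(\partial'')) = 0$ for all $\partial'' \in \g$. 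Writing an arbitrary $V \in M$ as a finite right $\A$-combination of elements of $\Mphi$ and using $\A$-linearity of $h$ in the second slot promotes this to $h(D_\partial \varphi(\partial'), V) = 0$ for every $V \in M$, and non-degeneracy of $h$ then forces $D_\partial \varphi(\partial') = 0$. Combined with the $\A$-linearity of $D_\partial$ established in the first paragraph, this yields $\nabla = \nabla'$.
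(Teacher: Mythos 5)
Your proposal is correct and is the Koszul-formula uniqueness argument used in the cited source \cite{aw:curvature.three.sphere}; the present paper states the theorem only by citation, but it later invokes Koszul's formula when computing the Levi-Civita connection, so your route is exactly the one intended. You have also correctly isolated the one genuinely noncommutative ingredient, namely the real connection calculus axiom $h(\nabla_\partial E,E')^\ast = h(\nabla_\partial E,E')$ for $E,E'\in\Mphi$, which is what annihilates the otherwise obstructive cross terms $h(E,\nabla_\partial E')-h(\nabla_\partial E',E)$ that vanish automatically in the classical setting.
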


Let us now return to the noncommutative cylinder. The algebra $\Chinf$
consists of smooth functions on $\reals\times S^1$ that fall off
rapidly at infinity. Of course, there are many more smooth functions
on $\reals\times S^1$ and in this section we allow ourselves to
consider a different algebra $\Chhinf$ consisting of elements
\begin{align*}
  f(u,t) = \sum_{n\in\integers}f_n(u)e^{2\pi int}
\end{align*}
where $f_n\in C^\infty(\reals)$ is such that $f_n\neq 0$
only for a finite number of terms. The product is defined as before as
\begin{align*}
    (f\prodh g)(u,t) = \sum_{n\in\integers}\bracketc{
    \sum_{k\in\integers}f_k(u)g_{n-k}(u+k\hbar)
    }e^{2\pi i nt}
\end{align*}
and the $\ast$-algebra structure is again given as in
Proposition~\ref{prop:star.structure}. Note that whenever
$f,g\in\Chinf\cap\Chhinf$ the products of the two algebras coincide
(as well as the involution). In what follows, we shall construct a pseudo-Riemannian
calculus over $\Chhinf$.

Let $\g$ denote the real (abelian) Lie algebra generated by the hermitian
derivations $\partial_1,\partial_2$, and let $\TChhinf$ denote the free (right)
$\Chhinf$-module of rank 2, with basis $e_1,e_2$. Moreover, we
introduce the hermitian form $h:\TChhinf\times\TChhinf\to\Chhinf$
\begin{align*}
   h(X,Y) = (X^i)^\ast h_{ij}Y^j
\end{align*}
for $X=X^ie_i$ and $Y=Y^ie_i$ and $h_{ij}\in\Chhinf$ such that
$h_{ij}^\ast=h_{ji}=h_{ij}$. (Here and in the following we sum over up-down repeated indices from 1 to 2.)
We shall assume that
$h$ is invertible in the sense that there exists $h^{ij}\in\Chhinf$
such that $h^{ij}h_{jk}=h_{kj}h^{ji}=\delta^i_k$.  (For instance, one
might choose $h_{ij}=e^{2k(u)}\delta_{ij}$ for arbitrary (real)
$k(u)\in\Chhinf$.) Clearly, $h$ is nondegenerate in the sense that
$h(X,Y)=0$ for all $Y\in\TChhinf$ implies that $X=0$. 

\noindent
Finally, we
define $\varphi:\g\to\TChhinf$ as $\varphi(\partial_k) = e_k$ (for $k=1,2$)
extended linearly to all of $\g$. It is immediate that the image of
$\varphi$ generate $\TChhinf$ and that $h(E,E')$ is hermitian for all
$E,E'$ in the image of $\varphi$.  These considerations imply that
$(\TChhinf,h,\g,\varphi)$ is a real metric calculus over
$\Chhinf$. The (unique) Levi-Civita connection can be computed via
Koszul's formula (cf. \cite{aw:cgb.sphere,aw:curvature.three.sphere})
which, since $[\partial_i,\partial_j]=0$, becomes
\begin{align*}
  h(\nabla_ie_j,e_k)
  =\frac{1}{2}\paraa{\partial_ih(e_j,e_k)+\partial_jh(e_k,e_i)-\partial_kh(e_i,e_j)},
\end{align*}
where $\nabla_i=\nabla_{\partial_i}$. Writing
$\nabla_{i}e_j=e_l\Gamma^l_{ij}$ gives
\begin{align*}
  (\Gamma^l_{ij})^\ast h_{lk}
  =\frac{1}{2}\paraa{\partial_ih_{jk}+\partial_jh_{ki}-\partial_kh_{ij}},
\end{align*}
and one finds that
\begin{align*}
  \Gamma^l_{ij} = \frac{1}{2}h^{lk}
  \paraa{\partial_ih_{jk}+\partial_jh_{ki}-\partial_kh_{ij}}.
\end{align*}
Let us exemplify this construction for
$h_{ij}=e^{2k(u)}\delta_{ij}$. In this case, one obtains
\begin{align*}
  &\Gamma^1_{11} = \Gamma^2_{12}=\Gamma^2_{21}=-\Gamma^1_{22}=k'(u)\\
  &\Gamma^2_{11} = \Gamma^1_{12}=\Gamma^1_{21}=\Gamma^2_{22} = 0
\end{align*}
giving
\begin{align*}
  \nabla_1e_1 = e_1k'(u)\qquad
  \nabla_1e_2 = \nabla_2e_1 = e_2k'(u)\qquad
  \nabla_2e_2 = -e_1k'(u).
\end{align*}
We immediately note that
\begin{align*}
  h(\nabla_ie_j,\nabla_ke_l)^\ast =
  h(\nabla_ie_j,\nabla_ke_l) 
\end{align*}
since both $\nabla_ie_j$ and $h_{ij}$ depend only on $u$. This implies
that the curvature of the connection will have all the classical
symmetries \cite{aw:curvature.three.sphere}; hence, there is only one
independent component of the curvature, and one finds
\begin{align*}
  &R(\partial_1,\partial_2)e_1 = \nabla_1\nabla_2e_1-\nabla_2\nabla_1e_1 = e_2k''(u)\\
  &R(\partial_1,\partial_2)e_2 = \nabla_1\nabla_2e_2-\nabla_2\nabla_1e_2 = -e_1k''(u)\\
  &R_{1212} = h\paraa{e_1,R(\partial_1,\partial_2)e_2} = -e^{2k(u)}k''(u),
\end{align*}
giving the Gaussian curvature as
\begin{align*}
  K = \frac{1}{2}h^{ij}R_{ikjl}h^{kl} = -e^{-2k(u)}k''(u).
\end{align*}
For a metric of the above form, a natural integration measure
corresponding to the volume form is given by
$\tau_h(f) = \tau(fe^{2k(u)})$. For the sake of illustration, let us
compute the total curvature (when it exists)
\begin{align*}
  \tau_h(K) &= -\int_{-\infty}^\infty e^{-2k(u)}k''(u)e^{2k(u)}du
  =-\int_{-\infty}^\infty k''(u)du\\
  &= \lim_{u\to-\infty}k'(u)-\lim_{u\to\infty}k'(u)
\end{align*}
Here one notes a certain independence of the total curvature
with respect to perturbations of the metric; i.e. for
$\tilde{k}(u)=\delta(u)+k(u)$ one finds that
$\tau_h(\tilde{K})=\tau_h(K)$ whenever
\begin{align*}
 \lim_{u\to\infty}\delta'(u)=\lim_{u\to-\infty}\delta'(u). 
\end{align*}
For instance, for $k(u)=\ln(\cosh(u))$, corresponding to the
induced metric on the catenoid (cf. \cite{ah:catenoid}), one obtains
\begin{align*}
  \tau_h(K) = \lim_{u\to-\infty}\tanh(u) - \lim_{u\to\infty}\tanh(u) = -2
\end{align*}
which, in the geometrical situation where the trace naturally gains a factor of
$2\pi$ (from the integration along $S^1$), gives the expected value
of $-4\pi$ for the total curvature of the catenoid.

\subsection*{Acknowledgments}

\bigskip
\noindent
JA is supported by the Swedish Research Council.  GL is partially
supported by INFN, Iniziativa Specifica ``Gauge and String Theories
(GAST)", and by the ``National Group for Algebraic and Geometric
Structures and their Applications (GNSAGA - INdAM)", and ``Laboratorio
Ypatia di Scienze Matematiche (LIA-LYSM)" CNRS - INdAM.

\end{document}